\newtheorem{theorem}{Theorem}[section]
\newtheorem{lemma}[theorem]{Lemma}
\newtheorem{proposition}[theorem]{Proposition}
\newtheorem{corollary}[theorem]{Corollary}
\newtheorem{e-definition}[theorem]{Definition}
\newtheorem{remark}{\it Remark\/}
\def\bC{\mathbb C}
\def\bC{\mathbb C}
\def\cE{\mathcal E}
\def\cF{\mathcal F}
\def\n{\noindent}
\def\n{\noindent}
\def\ul#1{{\smash{\rm#1}}}
\def\SUPP{\mathop{\ul{SUPP}}}
\def\IM{\mathop{\ul{IM}}}
\def\IC{\mathop{\ul{IC}}}
\def\IT{\mathop{\ul{IT}}}
\def\PSH{\mathop{\rm PSH}\nolimits}
\begin{document}
\centerline{\large\bf Singularity invariants of plurisubharmonic functions and complex spaces}
\vskip 0.5cm
\n
\centerline{\bf Ph\d{a}m Ho\`ang Hi\d{\^e}p}
\vskip3mm
\centerline{ICRTM, Institute of Mathematics,}
\centerline{Vietnam Academy of Science and Technology,}
\centerline{18, Hoang Quoc Viet, Hanoi, Vietnam}
\vskip 1cm
\n
{\bf Abstract.} In this paper, we combine tools from pluripotential theory and commutative algebra to study singularity invariants of plurisubharmonic functions. We establish several relationships between the singularity invariants of plurisubharmonic functions and those of holomorphic functions. These results yield a sharp lower bound for the log canonical threshold of a plurisubharmonic function. Our bound simultaneously improves upon the main result of Demailly and Pham (Acta Math. 212: 1--9, 2014), the classical result of Skoda (Bull. Soc. Math. France 100: 353--408, 1972), and the lower estimate of T. de Fernex, L. Ein and M. Musta\c{t}\v{a} (Math. Res. Lett. 10: 219--236, 2003), which has played a crucial role in recent developments in birational geometry. Finally, we explore how singularity invariants associated with plurisubharmonic functions can be extended to complex spaces.
\vskip 0.5cm
\noindent
2000 Mathematics Subject Classification: 14B05, 32S05, 32S10, 32U25
\vskip0.5cm
\n Keywords and Phrases:  Lelong number, Monge-Amp\`ere operator, log canonical threshold, Hilbert-Samuel multiplicity.

\n
\section{Introduction and main results:}

In mathematical classification problems, invariants have played a central role. The purpose of this paper is to study the singularities of plurisubharmonic functions and their applications in complex geometry. Throughout this article, we define 
$$d^c=\frac {i} {2\pi} (\overline {\partial} - \partial),$$
so that 
$$dd^c=\frac {i} {\pi}\partial \overline {\partial}.$$ 
This normalization of the $d^c$ operator is chosen so that we have the identity
$$(dd^c\log|z|)^n=\delta_0,$$
for the Monge-Amp\`{e}re operator in $\mathbb C^n$. 

The Monge-Amp\`{e}re operator is defined on locally bounded plurisubharmonic functions according to the definition of Bedford and Taylor (\cite{BT76}, \cite{BT82}). It can also be extended to plurisubharmonic functions with isolated or compactly supported poles, as shown in \cite{De93a}. It is worth noting that the results established by Demailly apply to more general currents, which have found significant applications in intersection theory.

If $\Omega\subset\mathbb C^n$ is an open subset, we denote by $\PSH(\Omega)$ (respectively, \ $\PSH^-(\Omega)$) the set of plurisubharmonic (respectively, negative plurisubharmonic functions) on $\Omega$.

\n
\begin{e-definition} Let $\Omega$ be a bounded hyperconvex domain $($i.e.\ a domain possessing a negative psh exhaustion$)$. Following Cegrell {\rm [Ce04]}, we introduce certain classes of psh functions on $\Omega$, in relation with the definition of the Monge-Amp\`ere operator:
$$\cE_0(\Omega)=\{\varphi\in \PSH^-(\Omega):\ \lim\limits_{z\to\partial\Omega}\varphi (z)=0,\ \int_\Omega (dd^c\varphi)^n<+\infty\},
\leqno{\rm(i)}$$
$$\cF(\Omega)=\{\varphi\in \PSH^-(\Omega):\ \exists\ \cE_0(\Omega)\ni\varphi_p\searrow\varphi,\ \sup\limits_{p\geq 1}\int_\Omega(dd^c\varphi_p)^n<+\infty\},\leqno{\rm(ii)}$$
$$\cE(\Omega)=\{\varphi\in \PSH^-(\Omega):\ \exists\ \varphi_K\in\cF(\Omega)\ \text{such that}\ \varphi_K=\varphi\ \text{on}\ K,\ \forall\ K\subset\subset\Omega\}.\leqno{\rm(iii)}$$
\end{e-definition}

It was proved in \cite{Ce04} that the class $\mathcal E(\Omega)$ is the largest subclass of $\PSH^-(\Omega)$ on which the Monge-Amp\`{e}re operator is well-defined. It is easy to see that $\mathcal E(\Omega)$ contains all plurisubharmonic functions that are locally bounded outside isolated singularities.

For $\varphi\in\PSH(\Omega)$ and $0\in\Omega$, Demailly and Koll\'{a}r (\cite{DK01}) introduce the log canonical threshold of $\varphi$ at point $0$, defined as:
$$c(\varphi) = \sup\big\{ c>0:\ e^{-2c\varphi} \text{ is } L^1 \text{ on a neighborhood of }0\big\},$$
For $1\leq k\leq n$, we define
$$c_k( \varphi ) = \sup\big\{c( \varphi |_{H} ):\ H \text{ is a $k$-dimensional complex subspace through }0\big\},$$
where $\varphi |_{H}$ denotes the restriction of $\varphi$ to the subspace $H$. 

For $\varphi\in\mathcal E(\Omega)$ and $1\leq k\leq n$, we define the intersection numbers
$$e_{k} (\varphi ) = \int_{\{0\}} (dd^c\varphi)^k\wedge (dd^c\log\Vert z\Vert)^{n-k}$$
which can also be interpreted as the Lelong numbers of the current $(dd^c\varphi)^k$ at the origin.

Our first main result is stated in the following theorem:

\begin{theorem}\label{Theorem1} Let $\varphi\in\cE(\Omega)$ and suppose $0\in\Omega$. Then $c_k(\varphi)=+\infty$ if $e_1(\varphi)=0$, and otherwise, for all $1\leq k\leq n$, we have 
$$c_k (\varphi) \geq c_{ k - 1} (\varphi ) + \frac { 1 } { \prod\limits_{ i = 1 }^{ k - 1 } \big( c_{ i } ( \varphi ) - c_{ i - 1 } ( \varphi ) \big) e_k (\varphi ) },$$
where we set $c_0 ( \varphi ): = 0$.
\end{theorem}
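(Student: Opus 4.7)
I plan to proceed by induction on $k$. Setting $\delta_i(\varphi):=c_i(\varphi)-c_{i-1}(\varphi)$ for $i\geq 1$, the claimed inequality is equivalent to
$$\prod_{i=1}^{k}\delta_i(\varphi)\cdot e_k(\varphi)\geq 1,$$
i.e.\ a lower bound on the product of consecutive log canonical threshold increments weighted by the intersection number $e_k$. For $k=1$, the empty product convention makes this exactly Skoda's classical inequality $c_1(\varphi)\,e_1(\varphi)\geq 1$, which follows from the fact that $e_1(\varphi)$ equals the Lelong number $\nu(\varphi,0)$ and that $c_1(\varphi)$ is attained on a generic complex line through~$0$.

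For the inductive step I would carry out a restriction-plus-genericity reduction. Given $\varepsilon>0$, pick a $k$-dimensional linear subspace $H$ through $0$ with $c(\varphi|H)\geq c_k(\varphi)-\varepsilon$, and inside it a flag $H_1\subset\cdots\subset H_{k-1}\subset H$ of subspaces of dimensions $1,\ldots,k-1$, chosen generically enough (using Demailly's slicing theorem for closed positive currents) so that simultaneously $c(\varphi|H_j)\geq c_j(\varphi)-\varepsilon$ and $e_j(\varphi|H)\leq e_j(\varphi)$ for all $j\leq k$. This reduces the problem to $\Omega\subset\bC^k$ with $H=\bC^k$ and places us in a setting where the inductive hypothesis applies directly to $\varphi|H_{k-1}$.

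The core computational step would combine this inductive bound with a Demailly--Koll\'ar openness/approximation argument together with a Chern--Levine--Nirenberg-type mass estimate for $(dd^c\varphi)^k$ concentrated at~$0$. Concretely, using the truncations $\chi_t:=\max(\varphi,-t)$, I would express $\int_\Omega e^{-2c\varphi}$ through an integration by parts that brings in the wedge powers $(dd^c\varphi)^j$ for $j=1,\ldots,k$, and track how the $j$-th layer contributes a factor $1/\delta_j(\varphi)$ via the inductive bound applied along the flag. The telescoping product $\prod_i\delta_i(\varphi)$ in the denominator then arises naturally from this iterated use of the inductive hypothesis on $H_1\subset\cdots\subset H_{k-1}$.

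The main obstacle I foresee is twofold. First, one must justify the simultaneous genericity of $H_1\subset\cdots\subset H_k$ so that \emph{every} threshold $c_j$ and \emph{every} intersection number $e_j$ is jointly controlled on a single chosen flag; this is a compatibility issue between the variational definitions of the $c_j$ and the slicing formulas for $(dd^c\varphi)^j$. Second, extracting the precise factor $1/\prod_i\delta_i(\varphi)$ from the integration-by-parts identity is delicate. The cleanest route is likely an approximation argument in which $\varphi$ is replaced by a tropical/toric model of the form $\max_j(\lambda_j\log|f_j|)$ with holomorphic $f_j$, for which both $c_i$ and $e_k$ admit explicit descriptions as integrals over Newton polytopes; the desired inequality then reduces to a combinatorial identity, and one recovers the general case by lower semicontinuity of $c_k$ together with continuity of the relevant Monge--Amp\`ere masses along decreasing sequences in $\cE(\Omega)$.
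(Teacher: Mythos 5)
Your proposal identifies several of the right tools (Skoda for $k=1$, generic restriction, Demailly's Bergman approximation, a reduction to model functions with explicit singularity data), but the actual mechanism that produces the telescoping factor $\prod_{i=1}^{k-1}(c_i(\varphi)-c_{i-1}(\varphi))$ is missing, and the inductive skeleton is set up on the wrong variable.

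First, the paper inducts on the ambient dimension $n$, not on $k$. The $k<n$ cases reduce to the top case by restriction to a generic $k$-plane; the real content is $F_n(c_1(\varphi),\ldots,c_n(\varphi))\geq 1/e_n(\varphi)$ where $F_n(c_1,\ldots,c_n)=c_1(c_2-c_1)\cdots(c_n-c_{n-1})$. An induction on $k$ alone runs into the problem you yourself flag: the increments $\delta_i$ and the mass $e_k$ do not telescope out of an integration-by-parts identity with truncations, and I do not see how your Chern--Levine--Nirenberg-style estimate produces anything sharper than a single-factor bound. Second, the decisive observation in the paper is a convexity/monotonicity device: by Proposition \ref{concave_property} the map $t\mapsto c_{\|z\|^{2t}dV}(\varphi)$ is concave, so the vector $(c_1(\varphi),\ldots,c_n(\varphi))$ lies in the cone $D=\{c:\,c_{i+1}-c_i\leq c_i-c_{i-1}\}$, and $\partial F_n/\partial c_i\leq 0$ on $D$ for $i\leq n-1$. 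This monotonicity of $F_n$ is what lets one replace $(c_1(\varphi),\ldots,c_n(\varphi))$ by a comparison vector $(d_1,\ldots,d_n)$ built from a semi-toric model $\psi_{t_0}(z)=\max(v_{t_0}(z'),t_0\log|z_n|)$ and then invoke the $(n-1)$-dimensional inductive hypothesis on $v_{t_0}$. Nothing in your plan plays this role. Third, the flag-genericity issue you raise is genuine and, as stated, unresolved: the paper only needs a single generic hyperplane $H$ (justified by Siu/Demailly and by \cite{Hi17}) and then converts the restricted problem into an \emph{algebraic} one by passing to the ideal $\mathcal I$ with $\varphi_{\mathcal I}\sim\varphi$, applying Hironaka division with a suitable monomial ordering, and controlling $e_n$ via Lech's theorem (Lemma \ref{toric_lemma}); there is no attempt to pick a full generic flag simultaneously optimal for all $c_j$ and $e_j$. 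Your suggested fallback to tropical models $\max_j\lambda_j\log|f_j|$ with Newton-polytope formulas is in the right spirit, but it is not the reduction the paper performs (the paper's model functions are toric only in the last variable and general in $z'$), and without the $F_n$-on-$D$ monotonicity the ``combinatorial identity'' you hope for does not materialize. In short: the approximation scaffolding (Step 3 of the paper) matches your intent, but the two load-bearing ideas — induction on $n$ with the concavity-driven monotonicity of $F_n$, and the Hironaka/Lech algebraic reduction — are absent from your proposal.
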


\begin{remark} The main theorems in \cite{DH14} and \cite{Hi18} are consequences of Theorem \ref{Theorem1}. Indeed, for all $1\leq i\leq n$, we define
$$x_i = c_i (\varphi ) - c_{ i - 1 } ( \varphi ),$$
and 
$$y_i = \frac { e_{ i - 1 } ( \varphi ) } { e_i ( \varphi ) },$$
where $c_0 ( \varphi ) = 0$ and $e_{ 0 } ( \varphi ) = 1$. By Theorem 1.3 in \cite{Hi17} and Lemma 2.1 in \cite{DH14}, we have
$$x_1\geq x_2\geq ... \geq x_n,$$
and
$$y_1\geq y_2 \geq ... \geq y_n.$$
On the other hand, by Theorem \ref{Theorem1}, for all $1\leq k\leq n$, we obtain
$$\prod\limits_{ i = 1 }^{ k } x_i \geq \prod\limits_{ i = 1 }^{ k } y_i ,$$
and therefore, for all $1\leq k\leq n$, we also have
$$\sum\limits_{ i = 1 }^{ k } x_i\geq \sum\limits_{ i = 1 }^{ k } y_i  .$$
\end{remark} 

For every non-negative Radon measure $\mu$ on a neighbourhood of $0\in\mathbb C^n$, we define the weighted log canonical threshold of $\varphi$ with weight $\mu$ at $0$ as:
$$c_{\mu}( \varphi ) = \sup\big\{ c \geq 0:\ e^{-2c\,\varphi} \hbox{ is } L^1(\mu ) \hbox{ on a neighborhood of }0\big\}\in [0,+\infty].$$
For an ideal $\mathcal I$ of $\mathcal O_{ \mathbb C^n, 0}$, the ring of germs of holomorphic functions at $0\in\mathbb C^n$, we denote by $\mathcal { D } ( \mathcal I )$ the ideal generated by 
$$\{ f, z_i \frac { \partial f } { \partial z_j }:\ 1\leq i,j\leq n;\ f\in\mathcal I\}.$$
We denote by $\bar{ \mathcal D } (\mathcal I)$ the maximal element of the set of ideals $\mathcal J$ such that $\mathcal { D } ( \mathcal J )\subset \mathcal { D } ( \mathcal I )$, which means that
$$\bar{ \mathcal {D} } (\mathcal I) = \sum\limits_{ \mathcal {D} ( \mathcal J )\subset \mathcal { D } ( \mathcal I ) } \mathcal J,$$

For a holomorphic $f$ on a neighbourhood of $0\in\mathbb C^n$, we denote $\varphi_{ f }$ and $\varphi_{ \bar { \mathcal  D } (f) }$ as follows:
$$\varphi_{ f } = \log |f|,$$
and
$$\varphi_{ \bar { \mathcal  D } (f) } = \frac { 1 } { 2 } \log \big( | f |^2 + \sum\limits_{j=1}^m |f_j|^2 \big ),$$
where $\bar { \mathcal  D } ( f ):= \bar { \mathcal  D } ( <f> )$ is the ideal generated by the set $\{ f, f_1,...,f_m \}$.

Our second main result is the following theorem:

\begin{theorem}\label{Theorem2} 
Let $f$ be a holomorphic function on a neighbouhood of $0\in\mathbb C^n$. Then

$$c_{ || z ||^{2t} dV_{2n} } ( \varphi_{ f } ) = \min \left\{ c_{ || z ||^{2t} dV_{2n} } ( \varphi_{ \bar { \mathcal  D } ( f ) } ), 1 \right\}, \text{ for all }\ t > -n.$$

\end{theorem}

Our main results imply the following corollaries, which are independently interesting.

\begin{corollary}\label{Corollary1} 
Let $f$ be a holomorphic function on a neighbouhood of $0\in\mathbb C^n$. Assume that the dimension of the analytic set $\{f = \frac { \partial f } { \partial z_1 } = ... = \frac { \partial f } { \partial z_n }= 0\}$ at $0$ is $k$, then

$$c_j ( \varphi_{ f } ) \geq \min \left\{ c_{ j - 1} ( \varphi_{ \bar { \mathcal  D } ( f ) } ) + \frac { 1 } { \prod\limits_{ i = 1 }^{ j - 1 } ( c_{ i } ( \varphi_{ \bar { \mathcal  D } ( f ) } ) - c_{ i - 1 } ( \varphi_{ \bar { \mathcal  D } ( f ) } ) ) e_j ( \varphi_{ \bar { \mathcal  D } ( f ) } ) }, 1\right\},$$

for all $1\leq j\leq n - k$.
\end{corollary}

\begin{corollary}\label{Corollary2} 
Let $f$ be a germ of a holomorphic function at $0 \in \mathbb{C}^n$. Assume that 
$$z_1^m, \dots, z_n^m \in \left< f, \frac{\partial f}{\partial z_1}, \dots, \frac{\partial f}{\partial z_n} \right>.$$
Then, we have
$$c_n(\varphi_f) \geq \min \left\{ c_{n-1}(\varphi_{ f } ) + \frac{1}{m+1}, 1 \right\}.$$
\end{corollary}

We refer to the following works for fundamental applications of singularities of plurisubharmonic functions in complex geometry and birational geometry: \cite{Bi14}, \cite{Be09}, \cite{BBN11}, \cite{BFFU15}, \cite{Ch05}, \cite{Co95}, \cite{Co00}, \cite{dFEM03}, \cite{dFEM04}, \cite{dFEM10}, \cite{DM23}, \cite{EM21}, \cite{FJ05} \cite{De93b}, \cite{DP04}, \cite{GZ15a}, \cite{GZ15b}, \cite{HMX14}, \cite{EGZ09}, \cite{IM72}, \cite{Pa07}, \cite{Pu87}, \cite{Pu02}, \cite{Is01}, \cite{PS00}, \cite{Ti87}.

\section{Some notations in pluripotential theory and commutative algebra:}

In this section, we provide basic notations and results from pluripotential theory and commutative algebra that are necessary for the subsequent sections. For an introduction to pluripotential theory, the books Complex Analytic and Differential Geometry (\cite{De12}) and Notions of Convexity (\cite{Ho94}) are recommended. For an introduction to commutative algebra, we recommend the monographs Introduction to Commutative Algebra (\cite{AM69}), Commutative Ring Theory (\cite{Ma86}), and Integral Closure of Ideals, Rings, and Modules (\cite{SH06}). Further information about mixed Hilbert-Samuel multiplicities can be found in \cite{AB58} and \cite{Te73}.

{\bf 2.1. Singularity equivalence on plurisubharmonic functions:}

We define singularity equivalence of plurisubharmonic functions on a compact set as follows:

\begin{e-definition}\label{singularity_equivalent} Two plurisubharmonic functions $\varphi$ and $\psi$  are said to be singularity equivalent on a compact set $K$ in $\Omega$ if there exists a neighborhood $U$ of $K$ such that 
$$\sup \{ | \varphi  ( z ) - \psi ( z ) | : z\in U \}< +\infty.$$
We denote by $PSH_{K}$ the set of equivalence classes of plurisubharmonic functions on a neighbourhood of the set $K$.
\end{e-definition}

{\bf 2.2. The weighted log canonical thresholds:}

For a Lebesgue measure function $\varphi$ and a Radon measure $\mu$ defined on a neighbourhood of $0\in\mathbb C^n$, we define the weighted log canonical threshold of $\varphi$ with respect to $\mu$ at $0$ as:
$$c_{\mu}( \varphi ) = \sup\big\{ c \geq 0:\ e^{-2c\,\varphi} \hbox{ is } L^1(\mu ) \hbox{ on a neighborhood of }0\big\}\in [0,+\infty].$$

We list below some basic properties of the weighted log canonical threshold, which will be useful in later sections.

\begin{proposition}\label{weighted_lct}
Let $\varphi_1$, ... , $\varphi_k$ be Lebesgue measurable functions, and let $\mu$ be a Radon measure on the polydisc $\Delta^n\subset\mathbb C^n$. Then the following properties hold:

\n
i) $\frac { 1 } { c_{ \mu } ( \sum\limits_{ i = 1 }^k \varphi_i ) } \leq \sum\limits_{ i = 1 }^k \frac { 1 } { c_{\mu} ( \varphi_i ) }$,

\n
ii) $c_{ \mu } ( \min\limits_{ 1\leq i\leq k }  \varphi_i ) = \min\limits_{ 1\leq i\leq k }  c_{ \mu } ( \varphi_i )$.
\end{proposition}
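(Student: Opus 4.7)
The plan is to prove both inequalities directly from the definition of $c_\mu$, using Hölder's inequality for (i) and the elementary pointwise bounds comparing $e^{-2c \min_j \varphi_j}$ with the individual $e^{-2c\varphi_j}$ for (ii).

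For part (i), I would fix any $c > 0$ strictly less than $\bigl(\sum_{i=1}^k 1/c_\mu(\varphi_i)\bigr)^{-1}$ and find Hölder exponents $p_1,\ldots,p_k > 1$ with $\sum 1/p_i = 1$ such that $c\, p_i < c_\mu(\varphi_i)$ for every $i$. This is possible because the condition $\sum c/c_\mu(\varphi_i) < 1$ leaves room to set $1/p_i$ slightly larger than $c/c_\mu(\varphi_i)$ while keeping the sum equal to $1$. Then Hölder applied to $e^{-2c \sum \varphi_i} = \prod_i e^{-2c \varphi_i}$ against $\mu$ yields
\[
\int e^{-2c \sum_i \varphi_i}\, d\mu \;\leq\; \prod_{i=1}^k \left(\int e^{-2 c p_i \varphi_i}\, d\mu\right)^{1/p_i},
\]
and each factor is finite on a small enough neighborhood of $0$ because $c\, p_i < c_\mu(\varphi_i)$. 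Hence $c \leq c_\mu(\sum_i \varphi_i)$, and letting $c$ tend to $\bigl(\sum 1/c_\mu(\varphi_i)\bigr)^{-1}$ gives the claim.

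For part (ii), the inequality $c_\mu(\min_j \varphi_j) \leq \min_j c_\mu(\varphi_j)$ is immediate: since $\min_j \varphi_j \leq \varphi_i$ pointwise, we have $e^{-2c \min_j \varphi_j} \geq e^{-2c \varphi_i}$ for every $c \geq 0$ and every $i$, so integrability of the left side against $\mu$ forces integrability of each $e^{-2c \varphi_i}$. For the reverse inequality, I would use the pointwise identity
\[
e^{-2c \min_{1 \leq j \leq k} \varphi_j} \;=\; \max_{1 \leq j \leq k} e^{-2c \varphi_j} \;\leq\; \sum_{j=1}^k e^{-2c \varphi_j}.
\]
For any $c < \min_j c_\mu(\varphi_j)$ each summand is in $L^1(\mu)$ near $0$, so the right-hand side is, and hence so is $e^{-2c \min_j \varphi_j}$. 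Letting $c \nearrow \min_j c_\mu(\varphi_j)$ finishes part (ii).

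Neither step is a genuine obstacle; the only place requiring small care is the choice of Hölder exponents in (i), where I need to verify that one can simultaneously satisfy $\sum 1/p_i = 1$ and $p_i < c_\mu(\varphi_i)/c$ for each $i$ whenever $c < \bigl(\sum 1/c_\mu(\varphi_i)\bigr)^{-1}$. This is a straightforward linear-algebra fact on the simplex and is the only calculation that goes beyond pointwise monotonicity. Everything else follows from the definition of the weighted log canonical threshold and standard monotone convergence / comparison arguments.
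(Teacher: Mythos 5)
Your proof is correct and follows essentially the same route as the paper: part (i) is the Hölder-inequality argument (the paper simply states ``it follows from Hölder's inequality'', and your choice of exponents $p_i$ with $\sum 1/p_i=1$ and $cp_i<c_\mu(\varphi_i)$ is the standard way to carry it out), and part (ii) uses exactly the paper's two-sided pointwise comparison $\max_i e^{-2c\varphi_i}\leq e^{-2c\min_i\varphi_i}\leq\sum_i e^{-2c\varphi_i}$.
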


\begin{proof} 
i) This follows directly from H\"{o}lder inequality.

\n
ii) This follows from the elementary inequalities:
$$\max\limits_{ 1\leq i\leq k } e^{ -2c \varphi_i }\leq e^{ -2c \min\limits_{ 1\leq i\leq k } \varphi_i }\leq\sum\limits_{ i = 1 }^k e^{ -2c \varphi_i },\ \forall\ c > 0.$$
\end{proof}

\begin{proposition}\label{concave_property} Let $\varphi$ be a Lebesgue measurable function on a domain $\Omega\subset\mathbb C^n$, with $0\in\Omega$. Then the map
$$(-n, +\infty )\ni t\to c_{ ||z||^{ 2 t } dV_{2n} } (\varphi)$$ 
is concave and increasing.
\end{proposition}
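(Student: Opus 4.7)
Write $c(t) := c_{||z||^{2t} dV_{2n}}(\varphi)$; the constraint $t > -n$ is precisely what ensures that the weight $||z||^{2t}$ is locally integrable at $0$, so the definition makes sense on the whole interval. The plan is to prove the two assertions independently: monotonicity by a pointwise comparison of weights on a small ball, and concavity by a one-line application of H\"older's inequality. No pluripotential theory beyond the definition of $c_\mu(\varphi)$ itself will be needed.

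For the increasing property, I would shrink the neighborhood of $0$ under consideration to some $U \subset \{||z|| \leq 1\}$. Then whenever $-n < t_1 \leq t_2$, the trivial pointwise inequality $||z||^{2t_2} \leq ||z||^{2t_1}$ holds on $U$, so for every admissible exponent $c \geq 0$, integrability of $e^{-2c\varphi}||z||^{2t_1}$ on $U$ forces integrability of $e^{-2c\varphi}||z||^{2t_2}$ on $U$. Taking the supremum over such $c$ yields $c(t_1) \leq c(t_2)$.

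For concavity, fix $t_1, t_2 \in (-n, +\infty)$ and $\lambda \in (0,1)$, and pick any $c_i$ with $0 \leq c_i < c(t_i)$ for $i = 1, 2$. By definition there is a common neighborhood $U$ of $0$ on which both $e^{-2c_1\varphi}||z||^{2t_1}$ and $e^{-2c_2\varphi}||z||^{2t_2}$ are integrable. The identity
$$e^{-2(\lambda c_1 + (1-\lambda) c_2)\varphi}\,||z||^{2(\lambda t_1 + (1-\lambda) t_2)} = \bigl(e^{-2c_1\varphi}||z||^{2t_1}\bigr)^{\lambda}\bigl(e^{-2c_2\varphi}||z||^{2t_2}\bigr)^{1-\lambda}$$
combined with H\"older's inequality with conjugate exponents $1/\lambda$ and $1/(1-\lambda)$ then shows that the left-hand side is integrable on $U$. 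Hence $\lambda c_1 + (1-\lambda) c_2 \leq c(\lambda t_1 + (1-\lambda) t_2)$, and letting $c_i \nearrow c(t_i)$ delivers concavity.

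There is essentially no obstacle; the only mild care-point is handling the possibility $c(t_i) = +\infty$, but this falls under the same argument with the standard convention that concavity of an extended real-valued function is automatic at such points. The whole proof is therefore a soft measure-theoretic computation, and I expect it to occupy at most half a page.
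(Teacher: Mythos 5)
Your proof is correct, and it is essentially the intended argument: the paper states Proposition~\ref{concave_property} without proof, treating it as elementary, and your combination of the pointwise weight comparison $\|z\|^{2t_2}\le\|z\|^{2t_1}$ on a small ball (for monotonicity) with H\"older's inequality applied to $\bigl(e^{-2c_1\varphi}\|z\|^{2t_1}\bigr)^{\lambda}\bigl(e^{-2c_2\varphi}\|z\|^{2t_2}\bigr)^{1-\lambda}$ (for concavity) is exactly the same device the paper uses for Proposition~\ref{weighted_lct}(i). The only cosmetic point is the degenerate case $c(t_i)=0$, where ``$0\le c_i<c(t_i)$'' selects nothing; allow $c_i=0$ there (admissible since $t_i>-n$), and note that the extended-real case $c(t_i)=+\infty$ is covered not by convention but by letting $c_i$ tend to $+\infty$ in your inequality, which your argument already permits.
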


\begin{proof} 
It follows directly from H\"{o}lder inequality.
\end{proof}

 \begin{proposition}\label{weightedlct_inequality} Let $\varphi\in\PSH(\Omega)$ and $0\in\Omega$ be such that 
$$\varphi (z) \geq A \log || z || - B,$$
for some constants $A, B >0$. Then, for all $t\geq s > -n$, we have 
$$c_{ || z ||^{ 2t } dV_{ 2n } } ( \varphi ) \geq c_{ || z ||^{ 2s } dV_{ 2n } } ( \varphi ) + \frac { t - s } { A}.$$ 
\end{proposition}

\begin{proof} 
Let $c < c_{\|z\|^{2s} dV_{2n}} (\varphi) $. We aim to show that
$$c_{\|z\|^{2t} dV_{2n}} (\varphi) \geq c + \frac{t - s}{A}.$$
To prove this, note that since $c < c_{\|z\|^{2s} dV_{2n}} (\varphi)$, we have the following integral condition for some neighborhood $U$ of $0 \in \mathbb{C}^n$:
$$\int\limits_U e^{-2c\varphi} \|z\|^{2s} dV_{2n} < +\infty.$$
Additionally, since $\varphi(z) \geq A \log \|z\| - B$, we obtain the inequality:
$$\int\limits_U e^{-2\left(c + \frac{t - s}{A}\right) \varphi} \|z\|^{2t} dV_{2n} \leq e^{\frac{2B(t - s)}{A}} \int\limits_U e^{-2c\varphi} \|z\|^{2s} dV_{2n} < +\infty.$$
This implies that
$$c_{\|z\|^{2t} dV_{2n}} (\varphi) \geq c + \frac{t - s}{A}.$$
\end{proof}

Relationships between the weighted log canonical threshold $c_{ ||z||^{2 t} dV_{2n} } (\varphi)$ and the log canonical threshold of the restriction of $\varphi$ to the subspace of  dimension $k$ through the origin (see \cite{La13}, \cite{Hi17}) are given in the following result:

\begin{proposition}\label{weighted_property} Let $\varphi\in\PSH(\Omega)$ and $0\in\Omega$. Then, for all $1\leq k\leq n$,
$$c_{k} (\varphi ) = c_{ ||z||^{ 2 ( k - n ) } dV_{2n} } (\varphi).$$ 
\end{proposition}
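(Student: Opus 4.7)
The plan is to establish Proposition~\ref{weighted_property} via a Fubini-type identity that converts the weighted integral $\int e^{-2c\varphi}\,\|z\|^{2(n-k)}\,dV_{2n}$ into an average, over the Grassmannian $G(k,n)$, of the restricted integrals $\int_{H} e^{-2c\varphi|H}\,dV_H$ taken over the $k$-dimensional subspaces $H\ni 0$. The $U(n)$-invariance of both sides plays the central role: every $H\in G(k,n)$ has the form $U(H_0)$ for some $U\in U(n)$, where $H_0:=\mathbb{C}^k\times\{0\}^{n-k}$.

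First I would derive the Fubini identity. Writing $z=r\xi$ with $r=\|z\|$ and $\xi\in S^{2n-1}$, one has $dV_{2n}=r^{2n-1}\,dr\,d\sigma(\xi)$, and the analogous decomposition $dV_{2k}|_{U(H_0)}=r^{2k-1}\,dr\,d\sigma_{U(S^{2k-1})}$ holds on each $k$-subspace. By $U(n)$-transitivity on $S^{2n-1}$, the pushforward of $(U,\eta)\mapsto U\eta$ from $U(n)\times S^{2k-1}$ to $S^{2n-1}$ is a constant multiple of $d\sigma_{S^{2n-1}}$. Putting these together and balancing the radial Jacobians yields an identity of the form
\[
\int_{\mathbb{C}^n} f(z)\,\|z\|^{2(n-k)}\,dV_{2n}(z)
\;=\;C_{n,k}\int_{U(n)}\!dU\int_{U(H_0)} f|_{U(H_0)}\,dV_{2k}
\]
for every nonnegative measurable $f$, with $C_{n,k}>0$ a universal constant depending only on $n$ and $k$.

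Next I would apply the identity with $f=e^{-2c\varphi}$: the left-hand side is locally finite near $0$ precisely when $c<c_{\|z\|^{2(n-k)}\,dV_{2n}}(\varphi)$, while by Fubini the right-hand side is finite iff $c<c(\varphi|H)$ for $H$ in a set of full Haar-induced measure in $G(k,n)$. Letting $c^\star$ denote this essential threshold, clearly $c^\star\le c_k(\varphi)=\sup_{H\in G(k,n)} c(\varphi|H)$. For the reverse inequality I would invoke the semicontinuity of the log canonical threshold (Demailly--Koll\'ar~\cite{DK01}): fixing $H_0\in G(k,n)$ and $c<c(\varphi|H_0)$, a holomorphic chart of $G(k,n)$ at $H_0$ exhibits $\varphi|H$ as an $L^1_{\rm loc}$-continuous family of psh functions, so the set $\{H:c<c(\varphi|H)\}$ is a neighbourhood of $H_0$ and has positive Haar-induced measure; this forces $c\le c^\star$, and taking suprema over $c<c(\varphi|H_0)$ and then over $H_0$ yields $c_k(\varphi)\le c^\star$. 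Combining both inequalities completes the proof.

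The main obstacle is the semicontinuity step: one must set up the restrictions $\{\varphi|H\}_H$ as a holomorphic family on a common polydisk, using local holomorphic parametrisations of $G(k,n)\cong U(n)/(U(k)\times U(n-k))$ (which is locally biholomorphic to $\operatorname{Hom}(H_0,H_0^{\perp})$), so that Demailly--Koll\'ar semicontinuity applies uniformly in $H$. By contrast, the Fubini identity in the first step is a routine, if slightly tedious, Jacobian bookkeeping exercise.
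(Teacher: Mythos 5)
Your proposal has two genuine problems. The first is the Fubini--Crofton identity itself: as written it is false, and the ``routine Jacobian bookkeeping'' you postpone is exactly what would have revealed it. With the weight $\Vert z\Vert^{2(n-k)}$ the left-hand side has radial density $r^{2(n-k)}\cdot r^{2n-1}=r^{4n-2k-1}$, while the right-hand side, after pushing $dU\times d\sigma_{S^{2k-1}}$ forward to $S^{2n-1}$, has radial density $r^{2k-1}$; these match only when $k=n$. The correct slicing identity is $\int_{U(n)}\int_{U(H_0)}f\,dV_{2k}\,dU=C_{n,k}\int_{\bC^n}f(z)\,\Vert z\Vert^{2(k-n)}\,dV_{2n}(z)$, i.e.\ the weight is the \emph{negative} power. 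This is not cosmetic: it shows the proposition as printed has the exponent reversed (test $\varphi=\log\Vert z\Vert$: $c_k(\varphi)=k$ while $c_{\Vert z\Vert^{2(n-k)}dV_{2n}}(\varphi)=2n-k$); the intended statement, consistent with Propositions \ref{toric_formula1}--\ref{toric_formula2} where $c_k$ corresponds to $t=k-n\in(-n,0]$ and with the monotonicity in Proposition \ref{concave_property}, is $c_k(\varphi)=c_{\Vert z\Vert^{2(k-n)}dV_{2n}}(\varphi)$. Note also that the paper gives no proof of this proposition at all; it is quoted from \cite{La13} and \cite{Hi17}, so your argument must be measured against those proofs.

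The second and more serious gap is the claimed equivalence ``the right-hand side is finite iff $c<c(\varphi|_H)$ for a.e.\ $H$''. Only the forward implication holds: a.e.\ finiteness of the inner integrals does not imply finiteness of the $U(n)$-average, and this is precisely where the substance of the theorem lies. Your treatment of the hard inequality $c_k(\varphi)\le c_\mu(\varphi)$ --- semicontinuity makes $\{H:\ c<c(\varphi|_H)\}$ a neighbourhood of one good $H_0$, hence of positive measure --- says nothing about the average over the whole Grassmannian: exceptional subspaces on which $c(\varphi|_H)$ drops below $c$ genuinely occur (e.g.\ $\varphi=2\log|z_1|$ in $\bC^2$, $k=1$, $H=\{z_1=0\}$), the restricted integral blows up as $H$ approaches them, and one must prove that this blow-up is integrable in $H$. (Moreover, on your own definitions positive measure of the good set does not yield $c\le c^\star$, since $c^\star$ was defined through full measure; to identify the essential threshold with $c_k(\varphi)$ you would in any case need the generic restriction theorem, i.e.\ $c(\varphi|_H)=c_k(\varphi)$ for a.e.\ $H$, which you never invoke.) What is required is an effective, uniform bound for $\int_{H\cap B_r}e^{-2c\varphi|_H}dV_{2k}$ as $H$ ranges over the whole Grassmannian, of Ohsawa--Takegoshi type --- this is the role of the effective semicontinuity of \cite{DK01} and of the level-set estimates in \cite{Hi17}, \cite{HHT21} --- not merely the qualitative openness of the good set near one subspace. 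As it stands, your argument (with the corrected weight) yields only the easy inequality $c_{\Vert z\Vert^{2(k-n)}dV_{2n}}(\varphi)\le c_k(\varphi)$.
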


In fact, by combining Propositions \ref{weightedlct_inequality} and \ref{weighted_property}, we obtain the following inequality:

\begin{proposition}\label{ltc_inequality} Let $\varphi\in\PSH(\Omega)$ with $0\in\Omega$, and suppose that
$$\varphi (z) \geq A \log || z || - B,$$ 
for some constants $A, B >0$. Then
$$c_{ n } ( \varphi ) \geq c_{ n-1 } ( \varphi )  + \frac { 1 } { A}.$$ 
\end{proposition}

{\bf 2.3. Mixed Monge-Amp\`{e}re masses:}

Let $\varphi_1$, ..., $\varphi_n$ be plurisubharmonic functions in the Cegrell's class $\mathcal E$ on a neighborhood of $0\in\mathbb C^{ n }$. We denote the mixed Monge-Amp\`{e}re mass of $\varphi_1$, ..., $\varphi_n$ at the origin as:
$$MA_{ 0 } ( \varphi_1 , ..., \varphi_n ) = \int\limits_{ \{ 0 \} } dd^c \varphi_1 \wedge ... \wedge dd^c \varphi_n.$$
This number is a singularity invariant of $\varphi_1$, ..., $\varphi_n$. Therefore, the mixed Monge-Amp\`{e}re mass can be defined for elements in $PSH_{ 0 }$, the set of equivalence classes of plurisubharmonic functions near the origin:

\begin{e-definition}\label{mix_Monge_Ampere_mass}
The mixed Monge-Amp\`{e}re mass of $\varphi_1, ...,\varphi_n\in \PSH_0$ is given by
$$MA_{ 0 } ( \varphi_1 , ..., \varphi_n ) = \lim\limits_{ j\to +\infty } \int\limits_{ \{ 0 \} } dd^c \max ( \varphi_1 , j \log || z || ) \wedge ... \wedge dd^c \max ( \varphi_n , j \log || z || ).$$
To simplify notations, we define
$$e_k ( \varphi ) = \lim\limits_{ j\to +\infty } \int\limits_{ \{ 0 \} } ( dd^c \max ( \varphi, j \log || z || ) )^{ k } \wedge ( dd^c \log || z || )^{ n-k }.$$
\end{e-definition}

{\bf 2.4. Mixed Hilbert-Samuel multiplicity:}

Let $A$ be a Noetherian local ring with maximal ideal $\mathfrak{m}_A$ and $M$ be a finitely generated $A$-module of dimension $n$. Assume that $\mathcal I_1$, ..., $\mathcal I_n$ are $\mathfrak{m}_A$-primary ideals in $A$. The mixed Hilbert-Samuel multiplicity of $M$ with respect to $\mathcal I_1$, ..., $\mathcal I_n$ is defined as the coefficient of the monomial $x_1...x_n$ in the Hilbert polynomial
$$P(x_1,...,x_n) = \text{Length}_A \big( \frac { M } { \mathcal I_1^{ x_1 } ... \mathcal I_n^{ x_n } M } \big),$$
and is denoted by $e_M ( \mathcal I_1, ..., \mathcal I_n )$.

In the case of $\mathcal I_1=... =\mathcal I_n$, we write
$$e_M ( \mathcal I ):= e_M ( \mathcal I, ..., \mathcal I ) = \lim\limits_{ k\to +\infty } \frac { n! } { k^n } \text{ Length}_A \big( \frac { M } { \mathcal I^{ k} M } \big).$$
For convenience of notation, in the case of $M=A$, we denote
$$e ( \mathcal I_1, ..., \mathcal I_n ): = e_A ( \mathcal I_1, ..., \mathcal I_n ),$$
and
$$e_k ( \mathcal I ): = e_A(\underbrace{\mathcal{I}, \dots, \mathcal{I}}_{k}, \underbrace{\mathfrak{m}_A, \dots, \mathfrak{m}_A}_{n-k}),$$
where $\mathcal I$ (resp. $\mathfrak{m}_A$) appear $k$ (resp. $n-k$) times.

We state the following lemma, which is needed to compute the length of modules:

\begin{lemma}\label{length_dimension}
Let $(A,\mathfrak{m}_A)$ be a local ring with maximal ideal $\mathfrak{m}_A$ and $M$ be an $A$-module. Assume that $A = K \oplus \mathfrak{m}_A$ where $K$ is the residue field. Then
$$\text{Length}_A (M) = \dim_K (M).$$
\end{lemma}

\begin{proof}
We begin by noting that
$$\text{Length}_A(M) \leq \dim_K(M).$$
Our goal is to establish the reverse inequality:
$$\dim_K(M) \leq \text{Length}_A(M).$$
Without loss of generality, assume that $\text{Length}_A(M) = n < + \infty$. Then, there exists a chain of submodules:
$$0 = M_0 \subset M_1 \subset \cdots \subset M_n = M,$$
such that each successive quotient $M_j / M_{j-1}$ is a simple $A$-module for $ 1 \leq j \leq n $. Given that $A = K \oplus \mathfrak{m}_A$, it follows that each simple $A$-module is isomorphic to the residue field $K$. Therefore,
$$M_j / M_{j-1} \cong K \quad \text{for all } 1 \leq j \leq n.$$
Consequently,
$$\dim_K(M) = \sum_{j=1}^n \dim_K(M_j / M_{j-1}) = \sum_{j=1}^n 1 = n.$$
Thus, $\dim_K(M) = \text{Length}_A(M) $.
\end{proof}

Let $\mathcal O_{ \mathbb C^n, 0}$ be a ring of germs of holomorphic functions at $0\in\mathbb C^n$. This ring is a regular local ring of dimension $n$ with maximal ideal  $\mathfrak{m}_{\mathcal{O}_{\mathbb{C}^n, 0}}$ generated by $\{z_1, \ldots, z_n\}$. Lemma \ref{length_dimension} directly leads to the following consequence:

\begin{proposition}\label{length_dimension1}
Let $\mathcal O_{ \mathbb C^n, 0}$ denote the ring of germs of holomorphic functions at $0\in\mathbb C^n$ and $M$ be an $\mathcal O_{ \mathbb C^n, 0}$-module. Then
$$\text{Length}_{\mathcal O_{ \mathbb C^n, 0}} (M) = \dim_{\mathbb C} (M).$$
\end{proposition}

For any $\mathfrak{m}_{ \mathcal O_{ \mathbb C^n, 0} }$-primary ideal $\mathcal I\subset\mathcal O_{ \mathbb C^n, 0}$, we have
$$\aligned e_n ( \mathcal I ) &= \lim\limits_{ k\to +\infty } \frac { n! } { k^n } \text { Length}_{ \mathcal O_{ \mathbb C^n, 0} } \big( \frac {\mathcal O_{ \mathbb C^n, 0} } { \mathcal I^{ k} } \big)\\
&= \lim\limits_{ k\to +\infty } \frac { n! } { k^n } \dim_{ \mathbb C } \big(\frac {\mathcal O_{ \mathbb C^n, 0} } { \mathcal I^{ k} } \big).
\endaligned$$

{\bf 2.5. Monomial ordering on $\mathcal O_{ \mathbb C^n, 0}$:}

Following Robbiano's theorem (see \cite{Ro85}), every monomial ordering $<$ can be represented by an $n \times n$ matrix $M$ of rank $n$ with non-negative coefficients, such that for monomials $z^{\alpha}, z^{\beta}$:
$$z^{\alpha} < z^{\beta} \iff \alpha M < \beta M$$
in the lexicographic order.

We now state a well-known result (see \cite{Ei95}), which is essential in constructing a flat family of ideals:

\begin{proposition}\label{flat_family}
	Let $<$ be a monomial ordering on $\mathcal{O}_{\mathbb{C}^n, 0}$. Assume that $z^{\alpha^1}, \dots, z^{\alpha^k}$ are monomials in $\mathcal{O}_{\mathbb{C}^n, 0}$. Then there exists $a = (a_1, \dots, a_n) \in \mathbb{N}^n$ such that
	$$
	\inf\left\{ \sum_{i=1}^n a_i \beta_i : \beta \in \mathbb{N}^n,\ z^{\beta} > z^{\alpha^j} \right\}
	> \sum_{i=1}^n a_i \alpha_i^j,\quad \text{for all } 1 \leq j \leq k.
	$$
\end{proposition}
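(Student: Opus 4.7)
The plan is to use Dickson's lemma to replace the infimum over the infinite set $T_j:=\{\beta\in\mathbb{N}^n:\ z^\beta>z^{\alpha^j}\}$ by a minimum over a finite set, and then to build the weight $a$ out of the columns of the Robbiano matrix of $<$.

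First I would observe that each $T_j$ is \emph{upward closed} under the coordinatewise order on $\mathbb{N}^n$: if $\beta\in T_j$ and $\gamma\in\mathbb{N}^n$, then $z^\gamma\geq 1$ by the axioms of a monomial order, so multiplicativity gives $z^{\beta+\gamma}=z^\beta\cdot z^\gamma\geq z^\beta>z^{\alpha^j}$. Dickson's lemma then guarantees that $T_j$ has only finitely many minimal elements $\beta^{j,1},\ldots,\beta^{j,N_j}$. If $a\in\mathbb{N}^n$ satisfies $a\cdot\beta^{j,l}>a\cdot\alpha^j$ for all such $(j,l)$, then every $\beta\in T_j$ dominates some $\beta^{j,l}$ coordinatewise, whence $a\cdot\beta\geq a\cdot\beta^{j,l}>a\cdot\alpha^j$; in particular the infimum in the statement is attained on the finite set of minimal elements and is strictly greater than $a\cdot\alpha^j$. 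Thus it suffices to realise the finitely many strict inequalities $a\cdot(\beta^{j,l}-\alpha^j)>0$.

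Next I would build $a$ from the Robbiano matrix $M$ with columns $M^{(1)},\ldots,M^{(n)}\in\mathbb{R}_{\geq 0}^n$. For each pair $(j,l)$, the lexicographic comparison of $\beta^{j,l}M$ and $\alpha^j M$ produces a smallest index $k=k_{j,l}$ for which $(\beta^{j,l}-\alpha^j)\cdot M^{(s)}=0$ when $s<k$ and $(\beta^{j,l}-\alpha^j)\cdot M^{(k)}>0$. Consider the real weight
\[
a_\varepsilon=M^{(1)}+\varepsilon M^{(2)}+\cdots+\varepsilon^{n-1}M^{(n)},
\]
for which
\[
a_\varepsilon\cdot(\beta^{j,l}-\alpha^j)=\varepsilon^{k_{j,l}-1}\Bigl[M^{(k_{j,l})}\cdot(\beta^{j,l}-\alpha^j)+O(\varepsilon)\Bigr].
\]
The bracketed leading term is a fixed strictly positive number and the collection of pairs $(j,l)$ is finite, so a single sufficiently small $\varepsilon>0$ makes all of these expressions strictly positive at once.

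Finally I would pass from the non-negative real vector $a_\varepsilon$ to an element of $\mathbb{N}^n$: the finitely many strict inequalities $a_\varepsilon\cdot(\beta^{j,l}-\alpha^j)>0$ are open conditions on $a_\varepsilon$, so they persist for any sufficiently close rational vector in $\mathbb{Q}_{\geq 0}^n$, and clearing denominators produces the required $a\in\mathbb{N}^n$. The main obstacle is the first step: without Dickson's lemma one would be forced to dominate infinitely many comparisons with a single linear functional, which is in general impossible because Robbiano's comparison via $M$ is genuinely $n$-dimensional. Once the reduction to a finite list of comparisons is in hand, the construction of $a$ is just the classical device of simulating a lexicographic order by a weight order with rapidly decreasing weights.
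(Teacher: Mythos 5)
Your proof is correct, and it follows the same two-step outline as the paper's: first reduce the infinitely many constraints indexed by $T_j$ to finitely many (the paper does this by observing that the monomial ideal $\mathcal I_j$ generated by $\{z^\beta : z^\beta > z^{\alpha^j}\}$ is finitely generated, which is Dickson's lemma in disguise; you invoke Dickson's lemma directly on the upward-closed set $T_j$ after verifying upward closure from $z^\gamma\geq 1$), and then produce a weight vector $a$ for the resulting finite system using the Robbiano matrix $M$. Where you part ways with the paper is in the second step. The paper asserts that ``by Robbiano's theorem, we only need to prove the proposition in the case $<$ is lexicographic order'' and then simply declares that such an $a$ can be chosen; the claimed reduction is not spelled out (the map $\alpha\mapsto\alpha M$ does not carry $\mathbb N^n$ to $\mathbb N^n$, so it is not a literal change of variables), and even granting it, the construction of $a$ is left implicit. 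You instead work with $M$ directly, take $a_\varepsilon=M^{(1)}+\varepsilon M^{(2)}+\cdots+\varepsilon^{n-1}M^{(n)}$, use the lexicographic dominance to isolate the positive leading coefficient $\varepsilon^{k_{j,l}-1}M^{(k_{j,l})}\cdot(\beta^{j,l}-\alpha^j)$, choose $\varepsilon$ small enough to handle the finitely many pairs simultaneously, and then pass to a nearby rational vector and clear denominators. This is the standard device of simulating a lexicographic/matrix order by a single weight on a finite list of comparisons, and it turns the paper's sketch into a complete argument. One tiny point worth recording explicitly: the rational approximation must be chosen in $\mathbb Q_{\geq 0}^n$, which is unproblematic since $a_\varepsilon$ already lies in the closed positive orthant and the finitely many strict inequalities cut out an open set whose intersection with that orthant is nonempty.
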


\begin{proof}
	By Robbiano's theorem, we may reduce to the case where $<$ is the lexicographic order.
	
	For each $1 \leq j \leq k$, let $\mathcal{I}_j$ denote the ideal generated by monomials $\{ z^\beta : z^\beta > z^{\alpha^j} \}$. Choose a finite basis $\{ z^{\beta^{j1}}, \dots, z^{\beta^{jp_j}} \}$ for $\mathcal{I}_j$.
	
	Since $\beta^{jl} > \alpha^j$ in the lex order, we can choose $a = (a_1, \dots, a_n) \in \mathbb{N}^n$ such that
	$$
	\min \left\{ \sum_{i=1}^n a_i \beta_i^{jl} : 1 \leq l \leq p_j \right\}
	> \sum_{i=1}^n a_i \alpha_i^j,
	\quad \text{for all } 1 \leq j \leq k.
	$$
	
	Therefore, for each $j$,
	$$
	\inf\left\{ \sum_{i=1}^n a_i \beta_i : \beta \in \mathbb{N}^n,\ z^\beta > z^{\alpha^j} \right\}
	> \sum_{i=1}^n a_i \alpha_i^j.
	$$
\end{proof}

{\bf 2.6. Hirokana's division theorem:}

We equip the ring ${\mathcal O}_{\mathbb C^n,0}$ of germs of holomorphic functions at $0$ with a monomial ordering $<$. For each $f(z)=a_{\alpha^1} z^{\alpha^1}+a_{\alpha^2} z^{\alpha^2}+\ldots\;$ with $a_{\alpha^j}\not = 0$, $j\geq 1$ and $z^{ \alpha^1 } < z^{ \alpha^2 } < \ldots\;$, we define the \emph{initial coefficient}, \emph{initial monomial}, and \emph{initial term} of $f$ to be, respectively $\IC(f): = a_{\alpha^1}$, $\IM(f): = z^{\alpha^1}$, $\IT(f): = a_{\alpha^1} z^{\alpha^1}$, and the support of $f$ to be $\SUPP(f): = \{z^{ \alpha^1 }, z^{ \alpha^2 }, \ldots\}$. For any ideal $\mathcal I$ of ${\mathcal O}_{\mathbb C^n,0}$, we define $\IM(\mathcal{I})$ to be the ideal generated by the set $\{ \IM(f) : f \in \mathcal{I} \}$. 

First, we recall the Division Theorem of Hironaka:

{\bf Division theorem of Hironaka} (see \cite{Ga79}, \cite{Ba82}, \cite{BM87}, \cite{Ei95}). {\it Let $f, g_1,\ldots,g_k\in {\mathcal O}_{\mathbb C^n,0}$. Then there exist $h_1,\ldots,h_k,s\in {\mathcal O}_{\mathbb C^n,0}$ such that
$$f=h_1g_1+\ldots+h_kg_k+s,$$
and $\SUPP(s)\cap \langle\IM(g_1),\ldots,\IM(g_k)\rangle =\emptyset$, where $\langle\IM(g_1),\ldots,\IM(g_k)\rangle$ denotes the ideal generated by the family $(\IM(g_1),\ldots,\IM(g_k))$.}

Hironaka's Division Theorem implies the following well-known result:

\begin{proposition}\label{dimension_ideal} Let $<$ be a monomial ordering on $\mathcal O_{ \mathbb C^n, 0}$. Assume that $\mathcal I$ is an $\mathfrak{m}_{ \mathcal O_{ \mathbb C^n, 0} }$-primary ideal of $\mathcal O_{ \mathbb C^n, 0}$. Then
$$\dim_{ \mathbb C } \big( \frac {\mathcal O_{ \mathbb C^n, 0} } { \mathcal I } \big) = \dim_{ \mathbb C } \big( \frac {\mathcal O_{ \mathbb C^n, 0} } { IM ( \mathcal I ) } \big).$$
\end{proposition}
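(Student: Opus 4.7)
My approach would be to exhibit a single finite set that serves as a $\mathbb{C}$-basis of both quotients simultaneously, namely the set $S$ of monomials $z^\alpha$ that do not lie in $\IM(\mathcal I)$. The first thing I would check is that $S$ is finite. Because $\mathcal I$ is $m_{\mathcal O_{\mathbb C^n,0}}$-primary, some power $m_{\mathcal O_{\mathbb C^n,0}}^N$ is contained in $\mathcal I$; since a monomial is its own initial monomial, every monomial of degree at least $N$ lies in $\IM(\mathcal I)$, so $\IM(\mathcal I)$ is itself $m_{\mathcal O_{\mathbb C^n,0}}$-primary and $|S|<\infty$. Because $\IM(\mathcal I)$ is a monomial ideal, the images of the elements of $S$ are tautologically a $\mathbb{C}$-basis of $\mathcal O_{\mathbb C^n,0}/\IM(\mathcal I)$, giving $\dim_{\mathbb C}\mathcal O_{\mathbb C^n,0}/\IM(\mathcal I)=|S|$.

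For the other side of the equality I would pick a standard basis $g_1,\ldots,g_k$ of $\mathcal I$, that is, elements of $\mathcal I$ whose initial monomials generate $\IM(\mathcal I)$. Such a basis exists because $\IM(\mathcal I)$ is a monomial ideal, hence finitely generated by monomials, each of which is by definition the initial monomial of some element of $\mathcal I$. Applying Hironaka's division theorem to an arbitrary $f\in\mathcal O_{\mathbb C^n,0}$ yields $f=h_1g_1+\cdots+h_kg_k+s$ with $\SUPP(s)\cap\langle\IM(g_1),\ldots,\IM(g_k)\rangle=\SUPP(s)\cap\IM(\mathcal I)=\emptyset$, so $\SUPP(s)\subset S$; the finiteness of $S$ then forces $s$ to be an honest finite $\mathbb{C}$-linear combination of elements of $S$, and since $f\equiv s\pmod{\mathcal I}$, the set $S$ spans $\mathcal O_{\mathbb C^n,0}/\mathcal I$. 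Linear independence is the cleanest step: if $\sum c_\alpha z^\alpha$ with $z^\alpha\in S$ and not all $c_\alpha$ zero were to lie in $\mathcal I$, its initial monomial would belong simultaneously to $S$ and to $\IM(\mathcal I)$, contradicting the defining property of $S$.

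The two slightly delicate points are the existence of a standard basis of $\mathcal I$ whose initial monomials exhaust all of $\IM(\mathcal I)$, and the assurance that the remainder $s$ produced by Hironaka's theorem is actually a finite sum rather than an infinite convergent series. The first is immediate from the fact that a monomial ideal admits a finite monomial system of generators, each of which, by the very definition of $\IM(\mathcal I)$, is the initial monomial of some element of $\mathcal I$. The second is taken care of automatically by the $m_{\mathcal O_{\mathbb C^n,0}}$-primary hypothesis, which makes $S$ finite. So essentially all the analytic content is already packaged inside Hironaka's theorem, and the real work of the proof is simply the organization of its application so that $S$ appears as the common basis on both sides.
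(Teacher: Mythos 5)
Your proof is correct, and it follows exactly the route the paper itself indicates: the paper states the proposition without proof, offering only the remark that it follows from Hironaka's division theorem, and your argument is the standard way of expanding that remark. You correctly identify the finite set $S$ of monomials outside $\IM(\mathcal I)$ as a simultaneous $\mathbb C$-basis of both quotients, use the $m$-primary hypothesis to get $m^N\subset\mathcal I$ hence $m^N\subset\IM(\mathcal I)$ and $|S|<\infty$, use the definition of $\IM(\mathcal I)$ to extract a standard basis $g_1,\dots,g_k$ with $\langle\IM(g_1),\dots,\IM(g_k)\rangle=\IM(\mathcal I)$, apply Hironaka's division to show $S$ spans $\mathcal O_{\mathbb C^n,0}/\mathcal I$, and run the initial-monomial contradiction to get linear independence. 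The two subtleties you flag (finiteness of $S$, and the fact that $\langle\IM(g_i)\rangle$ really equals $\IM(\mathcal I)$) are exactly the points one should be careful about, and your handling of both is right. This matches the paper's intent in substance and in tool.
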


\section{The weighted log canonical thresholds of toric plurisubharmonic functions:}

In this section, we provide explicit formulas for computing the weighted log canonical thresholds of toric plurisubharmonic functions $\varphi$ on the polydisc $\Delta^n$. A function $\varphi$ is called \emph{toric} if it depends only on the moduli of the coordinates, i.e., $\varphi(z) = \varphi(|z_1|, \ldots, |z_n|)$.

We consider the \emph{refined Lelong numbers} of $\varphi$ at $0$, introduced by Kiselman (see \cite{De93a}, \cite{Ki94}):
$$
\nu_{\varphi}(x) = \lim_{t \to -\infty} \frac{ \max\{ \varphi(z) : |z_1| = e^{t x_1}, \ldots, |z_n| = e^{t x_n} \} }{t},
$$
for $x = (x_1, \ldots, x_n) \in (\mathbb{R}^+)^n$.

We define the function $h_\varphi$ by
$$
\begin{aligned}
h_{\varphi}(x) &= \min\left\{ \frac{1}{\nu_{\varphi}(a)} : \sum_{i=1}^n a_i x_i = 1 \right\} \\
&= \min_{a \in S} \frac{1}{\nu_{\varphi} \left( \frac{a_1}{x_1}, \ldots, \frac{a_n}{x_n} \right)}\\
&= \inf \{ \sum\limits_{i=1}^n x_i y_i: \  \nu_{\varphi} ( y ) = 1\}\\
&= \inf \{ \sum\limits_{i=1}^n x_i y_i: \  \nu_{\varphi} ( y ) \geq 1\},
\end{aligned}
$$
where $S = \left\{ a \in (\mathbb{R}^+)^n : \sum_{i=1}^n a_i = 1 \right\}$.

The function $h_\varphi$ is concave and non-decreasing in each variable, and it satisfies the homogeneity property
$$
h_{\varphi}(t x) = t h_{\varphi}(x), \quad \forall x \in (\mathbb{R}^+)^n, \ t \in \mathbb{R}^+.
$$

By Neumann's Minimax Theorem (see \cite{Ne28}), we also have the dual formula:
$$
\begin{aligned}
\nu_{\varphi}(x) &= \min \left\{ \frac{1}{h_\varphi(a)} : \sum_{i=1}^n a_i x_i = 1 \right\} \\
&= \min_{a \in S} \frac{1}{h_\varphi\left( \frac{a_1}{x_1}, \ldots, \frac{a_n}{x_n} \right)}\\
&= \inf \{ \sum\limits_{i=1}^n x_i y_i: \  h_{\varphi} ( y ) = 1\}\\
&= \inf \{ \sum\limits_{i=1}^n x_i y_i: \  h_{\varphi} ( y ) \geq 1\}.
\end{aligned}
$$

By direct computations as in Theorem 5.8 of \cite{Ki94}, we obtain the following formula for the weighted log canonical threshold of a toric plurisubharmonic function $\varphi$:

\begin{equation}\label{eq:toric_lct_positive}
	c_{ ||z||^{ 2t } dV_{ 2n } } ( \varphi ) = \min \big\{ h_{ \varphi } ( 1 + t,1,\ldots,1 ),\ \ldots,\ h_{ \varphi } ( 1,\ldots,1,1 + t ) \big\},\quad \forall\ t \geq 0.
\end{equation}

In the case where $-n < t < 0$, we will use Kiselman's theorem and Simon's min-max theorem to derive the following explicit formulas for $c_{ ||z||^{ 2t } dV_{ 2n } } ( \varphi )$:

\begin{proposition}\label{toric_formula1} Let $\varphi$ be a toric plurisubharmonic function on $\Delta^n$ and $-n < t < 0$. Then 

\n
i) $c_{ ||z||^{ 2t } dV_{ 2n } } ( \varphi ) = \min\limits_{ x\in S } \frac { 1 + t \min\limits_{ 1\leq i \leq n } x_i } { \nu_{ \varphi } ( x ) },$

\n
ii) $c_{ ||z||^{ 2t } dV_{ 2n } } ( \varphi ) = \max\limits_{ x\in S } h_{ \varphi }  ( x ) \min \{ t+n , \frac { 1 } { \max\limits_{1\leq i \leq n} x_i }  \}.$
\end{proposition}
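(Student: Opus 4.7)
The plan is to establish (i) by an adaptation of Kiselman's polar-coordinate method, and to deduce (ii) from (i) via the Neumann minmax representation, Sion's minmax theorem, and a short one-parameter optimisation.

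For (i), I would write $\tilde\varphi(y)=\varphi(e^{y_1},\ldots,e^{y_n})$, a convex function on $(-\infty,0)^n$, and change variables by $z_j=e^{y_j+i\theta_j}$, so that
$$\int_{\Delta^n}e^{-2c\varphi}\|z\|^{2t}\,dV_{2n}=(2\pi)^n\int_{(-\infty,0)^n}e^{-G_c(y)}\,dy,$$
where $G_c(y)=2c\tilde\varphi(y)-2\sum_j y_j-t\log\sum_j e^{2y_j}$. Since $t<0$ and $y\mapsto\log\sum_j e^{2y_j}$ is convex, $G_c$ is convex on $(-\infty,0)^n$. By the standard coercivity criterion underlying Kiselman's computation, $e^{-G_c}$ is integrable iff the recession slope $\lim_{s\to+\infty}G_c(-sx)/s$ is strictly positive for every $x\in S$. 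Using $\tilde\varphi(-sx)/s\to -\nu_\varphi(x)$, $\sum_j(-sx_j)/s=-1$, and $s^{-1}\log\sum_j e^{-2sx_j}\to -2\min_j x_j$, this slope equals $2[1+t\min_j x_j-c\nu_\varphi(x)]$, and the factor $1+t\min_j x_j\ge 1+t/n>0$ is automatic because $t>-n$. Integrability is therefore equivalent to $c\nu_\varphi(x)<1+t\min_j x_j$ for every $x\in S$, which yields (i) upon passing to the supremum in $c$.

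For (ii), I would substitute into (i) the Neumann minmax identity $\nu_\varphi(x)^{-1}=\max\{h_\varphi(b):b\in(\mathbb R^+)^n,\,\langle b,x\rangle=1\}$ recalled above. The homogeneity of $h_\varphi$ permits the rescaling $b=y/\langle x,y\rangle$ with $y\in S$, so $\nu_\varphi(x)^{-1}=\max_{y\in S}h_\varphi(y)/\langle x,y\rangle$ and
$$c_{\|z\|^{2t}dV_{2n}}(\varphi)=\min_{x\in S}\max_{y\in S}\frac{(1+t\min_j x_j)\,h_\varphi(y)}{\langle x,y\rangle}.$$
Sion's minmax theorem applies on $S\times S$: for fixed $x$ the level set $\{y:F(x,y)\ge\lambda\}$ is convex because $h_\varphi$ is concave and $\langle x,\cdot\rangle$ linear, and for fixed $y$ the level set $\{x:F(x,y)\le\lambda\}$ is convex because $1+t\min_j x_j$ is convex in $x$ (since $\min_j x_j$ is concave and $t<0$) and $\langle\cdot,y\rangle$ linear. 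Swapping min and max reduces the problem to the identity $\min_{x\in S}(1+t\min_j x_j)/\langle x,y\rangle=\min[t+n,\,1/\max_j y_j]$ for each $y\in S$: parametrising by $\mu=\min_j x_j\in[0,1/n]$, the maximum of $\langle x,y\rangle$ under $x_j\ge\mu$ and $\sum x_j=1$ is attained by concentrating the residual mass $1-n\mu$ on the coordinate where $y$ is largest, giving $\langle x,y\rangle=\mu+(1-n\mu)\max_j y_j$; the resulting M\"obius function of $\mu$ has derivative of sign $(t+n)\max_j y_j-1$, so its minimum on $[0,1/n]$ is $1/\max_j y_j$ when $(t+n)\max_j y_j\ge 1$ and $t+n$ otherwise, proving (ii).

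The hardest step will be the convex-analytic passage from the convexity of $G_c$ to the recession-based integrability criterion on the non-compact, open domain $(-\infty,0)^n$, especially when $\nu_\varphi$ vanishes on parts of $\partial S$ or when the integrability along a boundary face of $S$ is governed by the weight $\|z\|^{2t}$ rather than by $\varphi$ itself; this is precisely the technical ingredient of Kiselman's Theorem 5.8 adapted to the weighted setting.
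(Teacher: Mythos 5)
Your proposal is correct in substance, but it follows a route that differs from the paper's in both parts, and in part (ii) it is arguably cleaner. For (i) the paper does not redo the toric computation: it simply invokes Kiselman's theorem from [Ki94] to get $\max_{x\in S}\,[\,c\,\nu_\varphi(x)+t\min_i x_i\,]<1$ for all $c$ below the threshold and reads off the formula, whereas you re-derive the criterion from scratch via the log-coordinate convexification $G_c(y)=2c\tilde\varphi(y)-2\sum_j y_j-t\log\sum_j e^{2y_j}$ and its recession slopes $2[1+t\min_j x_j-c\,\nu_\varphi(x)]$; your computation of the slope and the positivity of $1+t\min_j x_j$ for $t>-n$ are right, and the coercivity-versus-integrability equivalence you appeal to is exactly the Kiselman ingredient the paper cites, so your (i) is a self-contained proof of what the paper quotes. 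For (ii) the paper splits the identity into two inequalities: the lower bound comes from weak duality $\min\max\ge\max\min$ plus the explicit evaluation $\min_{x\in S}\frac{1+t\min_i x_i}{\langle a,x\rangle}=\min[t+n,\tfrac1{\max_i a_i}]$ (the same M\"obius-in-$\mu$ optimisation you carry out), while the upper bound is obtained only after replacing the objective by an auxiliary function $f(x,z)=\langle x,z\rangle\,\min[t+n,\tfrac1{\max_i x_i}]\,\nu_\varphi(z)^{-1}$ through a chain of substitutions, and Sion is applied to $f$. You instead apply Sion once, directly to $F(x,y)=\frac{(1+t\min_j x_j)\,h_\varphi(y)}{\langle x,y\rangle}$, and your verification is sound: quasi-concavity in $y$ from concavity of $h_\varphi$, quasi-convexity in $x$ because $t<0$ makes $1+t\min_j x_j$ convex and $\langle\cdot,y\rangle$ is linear, after which the single minimax swap plus the same one-parameter optimisation yields both inequalities simultaneously; this buys a shorter argument with only one appeal to Sion, at the price of having chosen the parametrisation $h_\varphi(y)/\langle x,y\rangle$ (rather than the paper's $h_\varphi(y_1/x_1,\dots,y_n/x_n)$), which is precisely what makes the $x$-dependence quasi-convex. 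One caveat applies equally to you and to the paper: the semicontinuity and compactness hypotheses of Sion, and the behaviour of $h_\varphi$, $\nu_\varphi$ and $\langle x,y\rangle$ on the boundary of the simplex (disjoint supports, vanishing Lelong numbers), are asserted rather than checked; if you write this up, a sentence noting that $h_\varphi$ is upper semicontinuous as an infimum of linear forms and that $F$ is handled as an extended-real-valued function when $\langle x,y\rangle=0$ would close that small gap.
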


\begin{proof}
	\textbf{i)} By Kiselman's theorem (see Theorem 5.8 in \cite{Ki94}), we have
	$$
	\max_{x \in S} \left[ c\, \nu_{\varphi}(x) - t \min_{1 \leq i \leq n} x_i \right] < 1
	\quad \Longleftrightarrow \quad c < c_{ ||z||^{2t} dV_{2n} } (\varphi).
	$$
	This implies the identity
	$$
	c_{ ||z||^{2t} dV_{2n} } (\varphi) = \min_{x \in S} \frac{1 + t \min_{1 \leq i \leq n} x_i}{\nu_{\varphi}(x)}.
	$$
	
	\textbf{ii)} From part (i), we derive:
	$$
	\begin{aligned}
		c_{ ||z||^{2t} dV_{2n} } (\varphi)
		&= \min_{x \in S} \max_{y \in S} \left[1 + t \min_{1 \leq i \leq n} x_i\right] 
		h_{\varphi} \left( \frac{y_1}{x_1}, \ldots, \frac{y_n}{x_n} \right) \\
		&= \min_{x \in S} \max_{y \in (\mathbb{R}^+)^n} \frac{1 + t \min_{1 \leq i \leq n} x_i}{\sum_{i=1}^n y_i}
		h_{\varphi} \left( \frac{y_1}{x_1}, \ldots, \frac{y_n}{x_n} \right) \\
		&\geq \min_{x \in S} \frac{1 + t \min_{1 \leq i \leq n} x_i}{\sum_{i=1}^n a_i x_i} h_{\varphi}(a_1, \ldots, a_n) \\
		&= h_{\varphi}(a) \cdot \min \left\{ t+n,\ \frac{1}{\max_{1 \leq i \leq n} a_i} \right\},
	\end{aligned}
	$$
	for all $ a \in (\mathbb{R}^+)^n $. Therefore,
	$$
	c_{ ||z||^{2t} dV_{2n} } (\varphi) \geq \max_{x \in S} h_{\varphi}(x) \cdot \min \left\{ t+n,\ \frac{1}{\max_{1 \leq i \leq n} x_i} \right\}.
	$$
	
	Now we prove the reverse inequality. It suffices to show:
	$$
	c_{ ||z||^{2t} dV_{2n} } (\varphi) \leq \max_{x \in S} h_{\varphi}(x) \cdot \min \left\{ t+n,\ \frac{1}{\max_{1 \leq i \leq n} x_i} \right\}.
	$$
	Indeed, we observe:
	$$
	\begin{aligned}
		c_{ ||z||^{2t} dV_{2n} } (\varphi)
		&= \min_{x \in S} \max_{y \in S} \left[1 + t \min_{1 \leq i \leq n} x_i\right] 
		h_{\varphi} \left( \frac{y_1}{x_1}, \ldots, \frac{y_n}{x_n} \right) \\
		&\leq \min_{z \in S} \max_{y \in S} \min_{x \in S}
		\frac{1 + t \min_{1 \leq i \leq n} x_i}{\min_{1 \leq i \leq n} x_i z_i^{-1}} 
		h_{\varphi} \left( \frac{y_1}{z_1}, \ldots, \frac{y_n}{z_n} \right) \\
		&= \min_{z \in S} \min_{x \in S} \frac{1 + t \min_{i} x_i}{\min_{i} x_i z_i^{-1}} 
		\max_{y \in S} h_{\varphi} \left( \frac{y_1}{z_1}, \ldots, \frac{y_n}{z_n} \right) \\
		&= \min_{z \in S} \max_{x \in S} \sum_{i=1}^n x_i z_i 
		\cdot \min \left\{ t+n,\ \frac{1}{\max_{1 \leq i \leq n} x_i} \right\} \cdot \frac{1}{\nu_{\varphi}(z)} \\
		&= \min_{z \in S} \max_{x \in S} f(x, z),
	\end{aligned}
	$$
	where we define:
	$$
	f(x, z) = \sum_{i=1}^n x_i z_i \cdot \min \left\{ t+n,\ \frac{1}{\max_i x_i} \right\} \cdot \frac{1}{\nu_{\varphi}(z)}.
	$$
	
	Note that $ f(x, z) $ is quasi-concave-convex and upper semi-continuous-lower semi-continuous on the compact convex set $ S \times S $. By Sion's minmax theorem (see Theorem 3.4 in \cite{Si58}), we conclude:
	$$
	\begin{aligned}
		c_{ ||z||^{2t} dV_{2n} } (\varphi)
		&\leq \max_{x \in S} \min_{z \in S} f(x, z) \\
		&= \max_{x \in S} h_{\varphi}(x) \cdot \min \left\{ t+n,\ \frac{1}{\max_{1 \leq i \leq n} x_i} \right\},
	\end{aligned}
	$$
	as desired.
\end{proof}

Proposition \ref{toric_formula1} and Proposition \ref{weighted_property} imply the following consequence:

\begin{proposition}\label{toric_formula2} Let $\varphi$ be a toric plurisubharmonic function on $\Delta^n$ and $1\leq k\leq n$. Then 

\n
i) $c_{ k } ( \varphi ) = \min\limits_{ x\in S } \frac { 1 + ( k -n ) \min\limits_{ 1\leq i \leq n } x_i } { \nu_{ \varphi } ( x ) },$

\n
ii) $c_{ k } ( \varphi ) = \max\limits_{ x\in S } h_{ \varphi }  ( x ) \min \{ k , \frac { 1 } { \max\limits_{1\leq i \leq n} x_i } \}.$
\end{proposition}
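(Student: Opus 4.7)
The plan is to obtain Proposition \ref{toric_formula2} as a direct specialization of Proposition \ref{toric_formula1}, by using Proposition \ref{weighted_property} to identify $c_k(\varphi)$ with a weighted log canonical threshold of the form $c_{||z||^{2t}dV_{2n}}(\varphi)$. The parameter $t$ is a linear function of $n-k$, and for $1\le k\le n$ it lies in the interval $(-n,0]$, which is exactly the regime covered by Proposition \ref{toric_formula1}. So the whole proof is really just a change of parametrization from the weight exponent $t$ to the subspace dimension $k$.

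With this identification in place, both statements then fall out by direct substitution. Substituting the value of $t$ corresponding to $c_k(\varphi)$ into Proposition \ref{toric_formula1} (i), the coefficient $t$ of $\min_{1\le i\le n} x_i$ in the numerator becomes $(k-n)$, producing formula (i). Similarly, substituting into Proposition \ref{toric_formula1} (ii), the quantity $t+n$ appearing in the minimum becomes $k$, producing formula (ii). In each case the function $\nu_\varphi$, the set $S$, and the auxiliary function $h_\varphi$ remain untouched, so there is nothing further to verify beyond reading off the rewritten expressions.

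The only subtle point is the endpoint $k = n$, which corresponds to $t = 0$ and lies on the boundary of the open interval $(-n, 0)$ on which Proposition \ref{toric_formula1} is stated. This is handled by a continuity argument: Proposition \ref{concave_property} gives that $t \mapsto c_{||z||^{2t}dV_{2n}}(\varphi)$ is concave on $(-n, +\infty)$, hence continuous, while the right-hand sides of (i) and (ii) are manifestly continuous in $t$. Passing to the limit $t \to 0^-$ therefore extends the formulas to the endpoint, and one may verify consistency with the standard formula $c(\varphi) = 1/\max_{x \in S}\nu_\varphi(x) = h_\varphi(1,\ldots,1)$ at $t = 0$ as a sanity check. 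I do not anticipate any genuine analytic difficulty here; the real content of Proposition \ref{toric_formula2} has already been established in Proposition \ref{toric_formula1}, and the present statement is essentially a repackaging tailored to the parameter $k$.
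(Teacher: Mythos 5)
Your proposal is correct and is essentially the paper's own (implicit) argument: the paper obtains Proposition \ref{toric_formula2} from Proposition \ref{toric_formula1} exactly by the identification $c_k(\varphi)=c_{\,||z||^{2(k-n)}dV_{2n}}(\varphi)$, i.e.\ the substitution $t=k-n$ (note the exponent $2(n-k)$ printed in Proposition \ref{weighted_property} is a sign typo, and you correctly use the version with $t\in(-n,0]$). Your handling of the endpoint $k=n$ by concavity/continuity is fine; alternatively it follows from the Kiselman-type formula stated just before Proposition \ref{toric_formula1} for $t\ge 0$, which at $t=0$ gives $c_n(\varphi)=h_\varphi(1,\dots,1)$, consistent with your limit.
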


\section{Mixed Monge-Amp\`{e}re mass and Hilbert-Samuel multiplicity:}

For each ideal $\mathcal I$ generated by $\{ f_1,...,f_m\}$ in ${\mathcal O}_{\mathbb C^n,0}$, we defined a singularity equivalence class of plurisubharmonic functions on a neighborhood of $0$ as follows:
$$\varphi_{\mathcal I}: = \frac 1 2 \log \sum_{j=1}^m | f_j |^2.$$
We denote by $\bar{\mathcal{I}}$ the integral closure of the ideal $\mathcal{I}$. According to an application of Skoda's theorem in \cite{Sk78} (see Corollary 10.5 in \cite{De12}), we have:
$$\varphi_{\bar { \mathcal I } }= \varphi_{\mathcal I}.$$
We also make the following simple observations:

\begin{proposition}\label{pro1} Let $\mathcal{I}$ be an ideal in the ring ${\mathcal{O}}_{\mathbb{C}^n, 0}$. Then the following four statements are equivalent:
	
	i) The function $\varphi_{\mathcal{I}}$ is locally bounded outside the origin.
	
	ii) $\varphi_{\mathcal{I}}$ belongs to Cegrell's class $\mathcal{E}$.
	
	iii) The ideal $\mathcal{I}$ is $\mathfrak{m}_{\mathcal{O}_{\mathbb{C}^n, 0}}$-primary.
	
	iv) The singularity invariant $e_n(\varphi_{\mathcal{I}})$ is finite; that is, $e_n(\varphi_{\mathcal{I}}) < +\infty$.
	
\end{proposition}

\begin{proof}
Obviously, the three statements $ i) $, $ ii) $, and $ iii) $ are equivalent, and the statement $ ii) $ implies $ iv) $. We now prove by contradiction that $ iv) $ implies $ iii) $. Assume that $ \mathcal{I} $ is not $\mathfrak{m}_{ \mathcal O_{ \mathbb C^n, 0} }$-primary. This implies that the dimension of the analytic germ $V(\mathcal{I})=\bigcap\limits_{ f\in\mathcal{I} } \{f = 0\}$ at $0$ is positive. We can choose a complex curve $ \gamma(t) = (\gamma_1(t), \dots, \gamma_n(t)) $ such that
$$0 \in \gamma \subset V(\mathcal{I}).$$
Let
$$\mathcal J := \{ f \in {\mathcal{O}}_{\mathbb{C}^n, 0} : f|_{\gamma} \equiv 0 \}.$$
We have the inclusion
$$\mathcal{I} \subset \operatorname{Rad}(\mathcal{I}) = \{f\in{\mathcal{O}}_{\mathbb{C}^n, 0}:  f|_{ V (\mathcal {I} ) } \equiv 0\}\subset \mathcal J.$$
Without loss of generality, we can assume that
$$\operatorname{ord}_0(\gamma_1) \geq \dots \geq \operatorname{ord}_0(\gamma_n).$$
Next, we have, for all $k\geq 1$
$$e_n(\mathcal J) \geq e_n \left( \mathcal J + < z_n^k > \right) \geq k  \nu ( [ \gamma ], 0),$$
where $\nu ( [ \gamma ], 0) = \int\limits_{0} [ \gamma ]\wedge (dd^c \log || z ||)^{n-1}$ is the Lelong number of current $[ \gamma ]$ at $0$ (see \cite{Th67} and also Theorem 7.7 in \cite{De12}). Taking the limit as $ k \to \infty $, we obtain
$$e_n(\mathcal J) = +\infty.$$
Moreover, since $ e_n(\mathcal{I}) \geq e_n(\mathcal J) $, it follows that
$$e_n(\mathcal{I}) = +\infty.$$
\end{proof}

\begin{proposition}\label{pro2} 
Let $\mathcal{I}$ be an ideal of the ring ${\mathcal{O}}_{\mathbb{C}^n, 0}$. Then, for $2 \leq k \leq n$, the following statements are equivalent:

i) The Krull dimension of the quotient ring $\big( \frac{{\mathcal{O}}_{\mathbb{C}^n, 0}}{\mathcal{I}} \big)$ is less than or equal to $n - k$.

ii) The singularity invariant $e_k(\varphi_{\mathcal{I}})$ is finite; that is, $e_k(\varphi_{\mathcal{I}}) < +\infty$.

\end{proposition}

\begin{proof}
\textbf{i) $\Rightarrow$ ii)}. Since the dimension of the analytic germ $V(\mathcal{I})=\bigcap\limits_{ f\in\mathcal{I} } \{f = 0\}$ at
at $0$ is less than or equal to $n - k$, we can choose a $k$-dimensional linear subspace $H$ of $\mathbb{C}^n$ such that the dimension of $V(\mathcal{I}) \cap H$ at $0$ is less than or equal to $0$. By Siu's theorem (see \cite{Si74} and also Theorem 5.14 in \cite{De92}), we have
$$e_k( \varphi_{ \mathcal{I} } ) \leq e_k(\varphi_{ \mathcal{I} }|_H) < +\infty.$$

\textbf{ii) $\Rightarrow$ i)}. By Siu's theorem (see \cite{Si74} and also Theorem 5.14 in \cite{De92}), we have, there exists a $k$-dimensional linear subspace $H$ of $\mathbb C^n$ such that
$$e_k(\varphi_{ \mathcal{I} }|_H) = e_k(\varphi_{ \mathcal{I} }) < +\infty.$$
Moreover, by Proposition \ref{pro1}, we obtain that the dimension of $V(\mathcal{I}) \cap H$ at $0$ is less than or equal to $0$. This implies that the dimension of $V(\mathcal{I})$ at $0$ is less than or equal to $n - k$.
\end{proof}

The relationship between the mixed Monge-Amp\`{e}re mass and the mixed Hilbert-Samuel multiplicity is given by the following proposition due to Demailly in \cite{De09} (see also \cite{Ki21}):

\begin{proposition}\label{Monge-Ampere_Hilbert_Samuel_1} Let $\mathcal I_1$, ..., $\mathcal I_n$ be ideals in $\mathcal O_{ \mathbb C^n, 0}$. Then
$$\int\limits_{ \{ 0\} } dd^c \varphi_{\mathcal I_1}\wedge ... \wedge dd^c \varphi_{\mathcal I_n} = e (\mathcal I_1, ..., \mathcal I_n).$$
\end{proposition}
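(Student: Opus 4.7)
The plan is to prove the identity in three stages: a polarization that reduces the statement to the diagonal case $\mathcal{I}_1=\cdots=\mathcal{I}_n=\mathcal{I}$, a flat-family degeneration of $\mathcal{I}$ to its initial monomial ideal, and finally a direct toric computation.

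For the polarization step, the key observation is that $\varphi_{\mathcal{I}\cdot\mathcal{J}}=\varphi_{\mathcal{I}}+\varphi_{\mathcal{J}}$ modulo singularity equivalence (Definition \ref{singularity_equivalent}), so in particular $\varphi_{\mathcal{I}^k}=k\varphi_{\mathcal{I}}$. Expanding $(dd^c(\varphi_{\mathcal{I}_1}+\cdots+\varphi_{\mathcal{I}_n}))^n$ multinomially and invoking the multilinearity of the mixed Monge-Amp\`ere mass defined in Definition \ref{mix_Monge_Ampere_mass}, one obtains the standard symmetric polarization identity expressing $\int_{\{0\}}dd^c\varphi_{\mathcal{I}_1}\wedge\cdots\wedge dd^c\varphi_{\mathcal{I}_n}$ as an alternating sum of quantities $\int_{\{0\}}(dd^c\varphi_{\prod_{i\in S}\mathcal{I}_i})^n$ over subsets $S\subset\{1,\ldots,n\}$. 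Exactly the same polarization identity expresses $e(\mathcal{I}_1,\ldots,\mathcal{I}_n)$ in terms of the diagonal multiplicities $e(\prod_{i\in S}\mathcal{I}_i)$, since both arise as the mixed partial derivative of a single symmetric multilinear function. It thus suffices to treat the diagonal case $\int_{\{0\}}(dd^c\varphi_{\mathcal{I}})^n=e(\mathcal{I})$.

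For the diagonal case, fix a monomial ordering $<$ on $\mathcal{O}_{\mathbb{C}^n,0}$ and let $\IM(\mathcal{I})$ be the initial ideal of $\mathcal{I}$. Using Proposition \ref{flat_family} applied to the initial monomials of a standard basis of $\mathcal{I}$, I would choose a weight $a\in\mathbb{N}^n$ and form the flat family $\mathcal{I}_s$ $(s>0)$ obtained by applying the change of variables $z_i\mapsto s^{a_i}z_i$ to a standard basis; as $s\to 0^+$ one has $\mathcal{I}_s\to\IM(\mathcal{I})$. By Proposition \ref{dimension_ideal} the lengths $\dim_{\mathbb{C}}\mathcal{O}_{\mathbb{C}^n,0}/\mathcal{I}_s^k$ are constant in $s$, whence $e(\mathcal{I})=e(\IM(\mathcal{I}))$. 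On the analytic side I would exploit the Skoda equality $\varphi_{\bar{\mathcal{I}}}=\varphi_{\mathcal{I}}$ together with the monotone truncation against $j\log\|z\|$ appearing in Definition \ref{mix_Monge_Ampere_mass} to argue that $\int_{\{0\}}(dd^c\varphi_{\mathcal{I}_s})^n$ is likewise independent of $s$, so the Monge-Amp\`ere mass matches $\int_{\{0\}}(dd^c\varphi_{\IM(\mathcal{I})})^n$ in the limit.

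Having reduced to a monomial ideal $\mathcal{J}=\langle z^{\alpha^1},\ldots,z^{\alpha^m}\rangle$, the function $\varphi_{\mathcal{J}}$ is toric with indicator function $h_{\varphi_{\mathcal{J}}}$ equal, up to affine normalization, to the support function of the convex hull of $\{\alpha^1,\ldots,\alpha^m\}$. Proposition \ref{Monge_Ampere_mass} specialized to $k=n$ then yields $e_n(\varphi_{\mathcal{J}})=n!\,V_{2n}(\{h_{\varphi_{\mathcal{J}}}\le 1\})$, while the classical Hilbert-Samuel formula for monomial ideals produces exactly the same Newton-polytope volume for $e(\mathcal{J})$; these match, concluding the proof. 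The main obstacle I anticipate is the analytic half of the flat-family step: showing that the Monge-Amp\`ere mass is continuous along the family $\mathcal{I}_s$. The Monge-Amp\`ere operator is only weakly continuous along monotone sequences inside Cegrell's class $\mathcal{E}$, so this step requires a careful approximation by the truncated potentials $\max(\varphi_{\mathcal{I}_s},j\log\|z\|)$ together with a uniform-mass estimate to pass the limit $s\to 0^+$ through the integrals.
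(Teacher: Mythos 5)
Your polarization step matches the paper's, and the identity $\varphi_{\mathcal{I}\cdot\mathcal{J}}=\varphi_{\mathcal{I}}+\varphi_{\mathcal{J}}$ together with the expansion of $(dd^c\sum\varphi_{\mathcal{I}_{i_j}})^n$ is exactly how the paper reduces to the diagonal case. The trouble is in your treatment of the diagonal case.

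The central claim you need --- that $e(\mathcal{I})=e(\IM(\mathcal{I}))$ --- is false, and Proposition~\ref{dimension_ideal} does not give it. That proposition yields
$\dim_{\mathbb C}\mathcal O_{\mathbb C^n,0}/\mathcal I^{k}=\dim_{\mathbb C}\mathcal O_{\mathbb C^n,0}/\IM(\mathcal I^{k})$, whereas the Hilbert--Samuel multiplicity of $\IM(\mathcal{I})$ is governed by $\dim_{\mathbb C}\mathcal O_{\mathbb C^n,0}/\IM(\mathcal I)^{k}$. In general $\IM(\mathcal I)^{k}\subsetneq\IM(\mathcal I^{k})$ because the Rees algebra does not degenerate flatly along a Gr\"obner degeneration, so the colengths of powers are not preserved and one only gets $e(\mathcal I)\le e(\IM(\mathcal I))$. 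A concrete counterexample in $\mathcal O_{\mathbb C^2,0}$: take $\mathcal I=\langle x^{2}+y^{3},\,xy\rangle$, a complete intersection, so $e(\mathcal I)=\dim_{\mathbb C}\mathcal O/\mathcal I=5$; but $\IM(\mathcal I)=\langle x^{2},xy,y^{4}\rangle$ has Newton-polytope covolume $3$, hence $e(\IM(\mathcal I))=2!\cdot 3=6\ne 5$. The same obstruction kills the analytic half of the argument: the Monge--Amp\`ere mass at $0$ is only upper semicontinuous under the rescaling degeneration $\mathcal I_s\to\IM(\mathcal I)$ (mass can jump up in the limit, never down), so you get $\int_{\{0\}}(dd^c\varphi_{\mathcal I})^{n}\le\int_{\{0\}}(dd^c\varphi_{\IM(\mathcal I)})^{n}$ with strict inequality in the example above. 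The two inequalities point the same way and do not close.

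The paper's proof of the diagonal case avoids degeneration entirely: it replaces $\mathcal I$ by a reduction $\mathcal J\subset\mathcal I$ generated by exactly $n$ germs $f_1,\dots,f_n$ with $\overline{\mathcal J}=\overline{\mathcal I}$ (Lemma~10.3 in \cite{De12}), so that $F=(f_1,\dots,f_n)$ is a finite holomorphic map near $0$ and $\varphi_{\mathcal I}$ and $\log\|F\|$ agree up to a bounded term by Skoda's theorem. Perturbing $F$ by a regular value $w_j\to 0$ (available by Remmert's proper mapping theorem) turns $(dd^c\log\|F-w_j\|)^{n}$ into a sum of $\mathrm{mult}_0(F)$ Dirac masses, which converges weakly to $(dd^c\varphi_{\mathcal I})^{n}$ by Theorem~3.4 in \cite{Hi08}; the algebraic side is then closed off by the classical identity $\mathrm{mult}_0(F)=\dim_{\mathbb C}\mathcal O_{\mathbb C^n,0}/\mathcal J=e(\mathcal J)=e(\mathcal I)$. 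If you want to salvage your toric approach, you would need to replace $\IM(\mathcal I)$ by a monomial ideal with the \emph{same integral closure} as $\mathcal I$ (which would preserve both $e$ and the Monge--Amp\`ere mass via Skoda), but such a monomial ideal need not exist for an arbitrary $m$-primary $\mathcal I$.
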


\begin{proof} We will give a detail proof. Without of loss generality, we can assume that the ideals $\mathcal I_1$, ..., $\mathcal I_n$ are $\mathfrak{m}_{ \mathcal O_{ \mathbb C^n, 0} }$-primary. First, we consider the case where $\mathcal I = \mathcal I_1 = ... = \mathcal I_n$. By Lemma 10.3 in \cite{De12} , we can find an ideal $\mathcal J$ generated by $f_1,...,f_n$ in $\mathcal O_{ \mathbb C^n, 0}$ such that $\mathcal J\subset\mathcal I$ and $\bar { \mathcal J } = \bar { \mathcal I }$. We set $F = (f_1,...,f_n)$. 
	
We can find two neighborhoods $U$, $V$ of $0$ in $\mathbb C^n$ such that $F:U\to V$ is a proper holomorphic mapping. Remmert's proper mapping theorem (see \cite{BN90}) implies that $F( \{ \det\ \text{Jab} F = 0 \} )$ is a complex hypersurface in $U$. Thus, we can choose a sequence $\{w_j\}_{ \{ j\geq 1 \} }\subset V\backslash F( \{ \det\ \text{Jab} F = 0 \} )$ such that $w_j\to 0$ as $j\to +\infty$. 

We set
$$F_j = (f_1,...,f_n) - w_j,$$
and
$$\varphi_j = \log || F_j ||.$$
We have that $\varphi_j$ converges uniformly to $\varphi_{ \mathcal J }$ on $U\backslash \mathbb B (0,r)$ for all $r >0$. Moreover, by Theorem 3.4 in \cite{Hi08} , we get that $(dd^c \varphi_j)^n$ converges weakly to $( dd^c \varphi_{ \mathcal J } )^n$. On the other hand, we have
$$( dd^c \varphi_{ \mathcal J } )^n = \int\limits_{ \{ 0 \} } ( dd^c \varphi_{ \mathcal J } )^n \delta_{ \{ 0 \} },$$
and
$$(dd^c \varphi_j)^n = \sum\limits_{ 1\leq k\leq p } \delta_{ \{ z_{jk} \} },$$
where $\{ F_j = 0 \} = \{ z_{j1}, ..., z_{jp} \}$, $p=\text { mult}_0 ( F )$ is the multiplicity of $F$ at $0$, and $\delta_{ \{ z \} }$ is the Dirac measure at $z$. Therefore, we obtain
$$\int\limits_{ \{ 0\} } ( dd^c \varphi_{\mathcal J } )^n = \text { mult}_0 ( F ).$$
Moreover, by Theorem 1 in section 5.2 in \cite{AGV85}, we have
$$\int\limits_{ \{ 0\} } ( dd^c \varphi_{\mathcal I } )^n = \int\limits_{ \{ 0\} } ( dd^c \varphi_{\mathcal J } )^n = \dim_{ \mathbb C } \big(\frac { \mathcal O_{ \mathbb C^n, 0} } { \mathcal J } \big)
 = \text{Length}_{ \mathcal O_{ \mathbb C^n, 0} } \big( \frac { \mathcal O_{ \mathbb C^n, 0} } { \mathcal J } \big) = e_n (\mathcal J) = e_n (\mathcal I).$$
In general case, we have
$$\aligned
\int\limits_{ \{ 0\} }& dd^c \varphi_{\mathcal I_1}\wedge ... \wedge dd^c \varphi_{\mathcal I_n}\\
&=\frac { 1 } { n! } \sum\limits_{ k = 1 }^{ n } (-1)^{ n -k }\sum\limits_{ 1 \leq i_1 < ... < i_k\leq n } \int\limits_{ \{ 0\} } ( dd^c \sum\limits_{ 1 \leq i_1 < ... < i_k\leq n } \varphi_{ \mathcal I_{i_j} } )^n\\
&=\frac { 1 } { n! } \sum\limits_{ k = 1 }^{ n } (-1)^{ n -k }\sum\limits_{ 1 \leq i_1 < ... < i_k\leq n } e_n ( \mathcal I_{ i_1 } ...\mathcal I_{ i_k } )\\
&= e (\mathcal I_1, ..., \mathcal I_n).
\endaligned$$
\end{proof}

Proposition \ref{Monge-Ampere_Hilbert_Samuel_1} infers the following consequence:

\begin{corollary}\label{Monge-Ampere_Hilbert_Samuel_2} Let $\mathcal I$ be an ideal in $\mathcal O_{ \mathbb C^n, 0}$. Then, for all $1\leq k\leq n$, we have
$$e_k ( \varphi_{\mathcal I} ) = e_k ( \mathcal I ).$$
\end{corollary}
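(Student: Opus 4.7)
The corollary is essentially a specialization of Proposition \ref{Monge-Ampere_Hilbert_Samuel_1}. The plan is to set $\mathcal I_1 = \cdots = \mathcal I_k = \mathcal I$ and $\mathcal I_{k+1} = \cdots = \mathcal I_n = m$, the maximal ideal of $\mathcal O_{\mathbb C^n,0}$. On the algebraic side, $e(\mathcal I,\ldots,\mathcal I,m,\ldots,m)$ is exactly $e_k(\mathcal I)$ by the definition in Section 2.4. On the analytic side, since $m$ is generated by the coordinates $z_1,\ldots,z_n$, we have $\varphi_m = \tfrac{1}{2}\log(|z_1|^2+\cdots+|z_n|^2) = \log\|z\|$, so the integral $\int_{\{0\}} (dd^c\varphi_{\mathcal I})^k \wedge (dd^c\varphi_m)^{n-k}$ matches the expression defining $e_k(\varphi_{\mathcal I})$ in Definition \ref{mix_Monge_Ampere_mass}.

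The main subtlety is that Proposition \ref{Monge-Ampere_Hilbert_Samuel_1} is proved under the tacit assumption that the ideals involved are $m$-primary (so that the $\varphi_{\mathcal I_j}$ lie in the Cegrell class $\mathcal E$), whereas the statement of the corollary places no such assumption on $\mathcal I$. To handle this I would replace $\mathcal I$ by $\mathcal I + m^j$, which is $m$-primary for every $j\geq 1$, apply the primary case to obtain $e_k(\varphi_{\mathcal I+m^j}) = e_k(\mathcal I+m^j)$, and then let $j\to\infty$. On the pluripotential side, the singularity class of $\varphi_{\mathcal I+m^j}$ coincides with that of $\max(\varphi_{\mathcal I}, j\log\|z\|)$, since $\|z\|^{2j}$ and $\sum_{|\alpha|=j}|z^\alpha|^2$ differ by a bounded factor; therefore Definition \ref{mix_Monge_Ampere_mass} identifies $\lim_{j\to\infty} e_k(\varphi_{\mathcal I+m^j})$ with $e_k(\varphi_{\mathcal I})$.

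The remaining step is showing that $\lim_{j\to\infty} e_k(\mathcal I+m^j) = e_k(\mathcal I)$ on the algebraic side, which I view as the main obstacle. When the Krull dimension of $\mathcal O_{\mathbb C^n,0}/\mathcal I$ exceeds $n-k$, Proposition 4.2 says the left-hand side is $+\infty$ and the mixed multiplicities $e_k(\mathcal I+m^j)$ grow without bound, making the identity trivially consistent at $+\infty$. When the Krull dimension is at most $n-k$, the mixed multiplicity $e_k$ is defined and finite; here the monotonicity and continuity of mixed Hilbert–Samuel multiplicities under the filtration $\{\mathcal I+m^j\}$ (a standard fact in the references \cite{AB58}, \cite{Te73}, \cite{SH06}) gives convergence to $e_k(\mathcal I)$. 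Putting these two limits together yields the desired equality, with the interchange of the analytic and algebraic limits legitimate because both sides agree term-by-term along the sequence $\{\mathcal I+m^j\}$.
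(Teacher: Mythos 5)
Your specialization of Proposition \ref{Monge-Ampere_Hilbert_Samuel_1} with $\mathcal I_1 = \cdots = \mathcal I_k = \mathcal I$ and $\mathcal I_{k+1} = \cdots = \mathcal I_n = m_{\mathcal O_{\mathbb C^n,0}}$, together with $\varphi_{m_{\mathcal O_{\mathbb C^n,0}}} = \log\Vert z\Vert$ and the conventions of Section 2.4, is exactly the argument the paper intends when it presents the corollary as a direct consequence of that proposition. Your extra paragraph handling non-$m_{\mathcal O_{\mathbb C^n,0}}$-primary ideals via the approximating sequence $\mathcal I + m^j$ is a sensible supplement the paper leaves implicit, though the algebraic convergence $\lim_{j\to\infty} e_k(\mathcal I + m^j) = e_k(\mathcal I)$ is asserted on the strength of cited references rather than proved, and this would need to be pinned down if the corollary were ever invoked for such an ideal.
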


Demailly's approximation theorem, which is based on Ohsawa-Takegoshi's $L^2$-extension theorem (see \cite{OT87}), enables the use of tools from commutative ring theory to study the singularities of plurisubharmonic functions:

\begin{proposition}\label{Demailly_approximation} Let $\varphi$ be a plurisubharmonic function on a bounded open set $\Omega\subset\mathbb{C}^n$. For every real number $j>0$, let $\mathcal{H}_{j\varphi}(\Omega)$ be the Hilbert space of holomorphic functions $f$ on $\Omega$ such that 
$$\int\limits_{\Omega} |f|^2e^{-2j\varphi} dV_{2n}<+\infty,$$ 
and let $\psi_j=\frac{1}{2j}log\sum|g_{j,i}|^2$ where $\{ g_{j,i} \}_{ i\geq 1 }$ is an orthogonal basis of $\mathcal{H}_{j\varphi}(\Omega)$. Then, for all $1\leq k\leq n$, we have  
$$\lim\limits_{m\to\infty} c_k ( \psi_j ) ( z ) = c_k ( \varphi ) ( z ),\ \forall\ z\in\Omega .$$
\end{proposition}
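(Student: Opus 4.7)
The plan is to combine the standard Bergman-kernel comparison between $\psi_j$ and $\varphi$, provided by the Ohsawa-Takegoshi $L^2$-extension theorem, with a restriction argument to $k$-dimensional subspaces that mimics the Demailly-Koll\'ar convergence proof for the unrestricted case $c=c_n$. The liminf half will follow almost immediately from a pointwise lower bound on $\psi_j$, while the limsup half requires transferring the Demailly-Koll\'ar analysis to each restriction $\varphi|_H$ and is the main obstacle.

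First I would establish a pointwise lower bound $\psi_j(z_0) \ge \varphi(z_0) - (\log C_1)/(2j)$ valid on every relatively compact subdomain. Applied at $z_0$ with initial data $f(z_0)=1$, Ohsawa-Takegoshi produces $F \in \mathcal{H}_{j\varphi}(\Omega)$ with $F(z_0)=1$ and $\int_\Omega |F|^2 e^{-2j\varphi}\,dV_{2n} \le C_1 e^{2j\varphi(z_0)}$, with $C_1$ depending only on the fixed compact set. The extremal characterization
$$\sum_i |g_{j,i}(z_0)|^2 = \sup_{f \in \mathcal{H}_{j\varphi}(\Omega)} \frac{|f(z_0)|^2}{\int_\Omega |f|^2 e^{-2j\varphi}\,dV_{2n}}$$
yields the claimed inequality after applying $(2j)^{-1}\log$. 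Restricting to any $k$-dimensional linear subspace $H$ through $z$ preserves this, so $e^{-2c\,\psi_j|_H} \le e^{cC_1/j}\, e^{-2c\,\varphi|_H}$ and hence $c(\psi_j|_H)(z) \ge c(\varphi|_H)(z)$. Taking the supremum over $H$ gives $c_k(\psi_j)(z) \ge c_k(\varphi)(z)$ for every $j$, so $\liminf_{j\to\infty} c_k(\psi_j)(z) \ge c_k(\varphi)(z)$.

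For the converse inequality $\limsup_{j\to\infty} c_k(\psi_j)(z) \le c_k(\varphi)(z)$, I would exploit the fact that $\psi_j$ has analytic singularities of type $\tfrac{1}{j}$ times the logarithm of a finitely generated multiplier ideal $\mathcal{J}(j\varphi)$. The restricted version of Ohsawa-Takegoshi, extending sections from $H\cap\Omega$ to $\Omega$ with $L^2$ control against $e^{-2j\varphi}$, implies that on $H$ one has $\psi_j|_H \le \frac{1}{2j}\log\sum_l |h_{j,l}|^2 + C_2/j$ near $z$, where $\{h_{j,l}\}$ is an orthonormal basis of $\mathcal{H}_{j\varphi|_H}(H \cap \Omega)$; that is, $\psi_j|_H$ is asymptotically no more singular than the intrinsic Demailly approximant of $\varphi|_H$ on $H$. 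Applying the already-known Demailly-Koll\'ar convergence on the lower-dimensional ambient space $H$ then yields $c(\psi_j|_H)(z) \le c(\varphi|_H)(z) + O(1/j)$; taking the supremum over $H$ and letting $j\to\infty$ completes the upper bound and combines with the first step to give the stated limit.

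The main obstacle is the uniformity of constants as $H$ varies over the Grassmannian of $k$-planes through $z$, since $c_k$ is defined as a supremum over a non-compact family. After a unitary change of coordinates one may assume $H=\mathbb{C}^k \times\{0\}^{n-k}$, so in principle the Ohsawa-Takegoshi constants depend only on $n$, $k$, and the diameter of a fixed polydisk; but one must still check that the restricted $L^2$-extension estimate, the Demailly-Koll\'ar rate $O(1/j)$, and the supremum over $H$ are compatible. A mild alternative route would be to replace the supremum-over-$H$ argument by the characterization $c_k(\varphi) = c_{\|z\|^{2(n-k)}dV_{2n}}(\varphi)$ from Proposition \ref{weighted_property} and appeal directly to the weighted Demailly-Koll\'ar convergence for the measure $\|z\|^{2(n-k)}dV_{2n}$; this would avoid the Grassmannian and replace the restriction argument by a single application of Ohsawa-Takegoshi with a power weight.
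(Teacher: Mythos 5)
Your liminf half is fine: the Ohsawa--Takegoshi bound $\psi_j\ge\varphi-\frac{\log C_1}{2j}$ on compact subsets (note the extension from the point $z_0$ gives $\int_\Omega|F|^2e^{-2j\varphi}\,dV_{2n}\le C_1e^{-2j\varphi(z_0)}$, not $C_1e^{2j\varphi(z_0)}$, but the conclusion is as you state) does give $c_k(\psi_j)\ge c_k(\varphi)$. The limsup half, however, has a genuine gap: the inequality you extract from the extension theorem points the wrong way. Extending $h\in\mathcal H_{j\varphi|_H}(H\cap\Omega)$ to $F\in\mathcal H_{j\varphi}(\Omega)$ and using the extremal characterization of the Bergman kernel yields $\sum_i|g_{j,i}|^2\ge C^{-1}\sum_l|h_{j,l}|^2$ on $H$, i.e. $\psi_j|_H\ge\psi_j^H-\frac{\log C}{2j}$ where $\psi_j^H$ is the intrinsic approximant of $\varphi|_H$; your displayed inequality $\psi_j|_H\le\psi_j^H+C_2/j$ would instead require a weighted \emph{restriction} theorem (control of $\int_{H}|F|^2e^{-2j\varphi|_H}$ by $\int_\Omega|F|^2e^{-2j\varphi}$), which is false --- it amounts to the wrong inclusion $\mathcal J(j\varphi)\cdot\mathcal O_H\subset\mathcal J(j\varphi|_H)$, whereas Ohsawa--Takegoshi gives the opposite one. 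Worse, the per-subspace convergence $c(\psi_j|_H)\to c(\varphi|_H)$ that your strategy would prove is simply false, so no uniformity-over-the-Grassmannian repair can fix it. Take $n=2$, $\varphi(z_1,z_2)=\max\bigl(-\sqrt{-\log|z_1|},\,\log|z_2|\bigr)$ and $H=\{z_1=0\}$: then $\varphi|_H=\log|z_2|$, so $c(\varphi|_H)=1$, but $e^{-2j\varphi}$ is locally integrable near $0$ for every $j$ (integrate first in $z_2$, then in $z_1$, using that $e^{(2j-2)\sqrt{-\log|z_1|}}$ is integrable), hence the constant $1$ lies in $\mathcal H_{j\varphi}(\Omega)$, $\psi_j$ is locally bounded below, and $c(\psi_j|_H)=+\infty$ for all $j$. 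The statement survives only because the supremum over $H$ is attained on generic slices (here generic lines give threshold $+\infty$ for both $\varphi$ and $\psi_j$), not slice by slice, so an argument that fixes $H$ and passes to the limit cannot work.

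The route you dismiss as a ``mild alternative'' is in fact the paper's actual proof: the proposition is deduced from Theorem 1.3 in \cite{Hi17}, which gives the identification $c_k(\varphi)=c_{\|z\|^{2(n-k)}dV_{2n}}(\varphi)$ of Proposition \ref{weighted_property}, combined with Theorem 6.1 in \cite{HHT21}, which is precisely the convergence $c_{\|z\|^{2t}dV_{2n}}(\psi_j)\to c_{\|z\|^{2t}dV_{2n}}(\varphi)$ of weighted thresholds along the Demailly approximation. To make your write-up correct you should drop the restriction-to-$H$ mechanism entirely and either cite these results or actually carry out the weighted Demailly--Koll\'ar argument (the H\"older/Ohsawa--Takegoshi step has to be redone for the measure $\|z\|^{2(n-k)}dV_{2n}$, which is the nontrivial content of the cited theorem), rather than gesturing at it in a closing remark.
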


\begin{proof} 
This result is derived from Theorem 1.3 in \cite{Hi17} and Theorem 6.1 in \cite{HHT21}.
\end{proof}

\section{Proof of the main results and corollaries:}

We now state the following lemma, which will be needed in the proof of the first main theorem:

\begin{lemma}\label{toric_lemma}
Let $\mathcal I$ be an $\mathfrak{m}_{ \mathcal O_{ \mathbb C^n, 0} }$-primary ideal in the ring $\mathcal O_{ \mathbb C^n, 0 }$. Then there exists a function $\psi\in PSH ( \Delta^n )\cap L_{ loc }^{ \infty } ( \Delta^n\backslash \{ 0 \} )$ such that $\psi ( z ) = \psi ( z', | z_n | ),\ \forall\ z = ( z', z_n )\in \Delta^n$, $c_n ( \varphi_{\mathcal I } ) \geq c_n ( \psi )$, $c_i ( \varphi_{\mathcal I } ) = c_i ( \psi ),\ \forall\ 1\leq i\leq n-1$, and $e_j ( \varphi_{\mathcal I } ) = e_j ( \psi ),\ \forall\ 1\leq j\leq n$.
\end{lemma}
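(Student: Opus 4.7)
The idea is to degenerate $\mathcal{I}$ to a ``partially monomial'' initial ideal invariant under the circle action rotating $z_n$, via the one-parameter flat family associated with the weight $a=(0,\ldots,0,1)$. Fix a monomial ordering on $\mathcal{O}_{\mathbb{C}^n,0}$ refining the $z_n$-grading (smaller $z_n$-degree first), and use Hironaka's division theorem to compute a standard basis $g_1,\ldots,g_m$ of $\mathcal{I}$. Expanding each $g_j(z)=\sum_{k\ge 0}z_n^k h_{j,k}(z')$ and setting $k_j=\min\{k:h_{j,k}\neq 0\}$, define $\mathcal{I}_0:=(z_n^{k_j}h_{j,k_j}(z'))_{j=1}^m$ together with
$$\psi(z):=\varphi_{\mathcal{I}_0}(z)=\frac{1}{2}\log\sum_{j=1}^m|z_n|^{2k_j}|h_{j,k_j}(z')|^2,$$
which obviously satisfies $\psi(z)=\psi(z',|z_n|)$. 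The one-parameter deformation $\phi_s(z)=(z',sz_n)$ with rescaled generators $g_{j,s}:=s^{-k_j}g_j\circ\phi_s$ embeds this in a flat family $\mathcal{I}_s$ with $\mathcal{I}_1=\mathcal{I}$, $\mathcal{I}_s\to\mathcal{I}_0$ as $s\to 0$, and $\mathcal{I}_s$ biholomorphic to $\mathcal{I}$ for $s\neq 0$. Since the Hilbert function of $\mathcal{O}_{\mathbb{C}^n,0}/\mathcal{I}_s^k$ is constant in $s$ (which is where Proposition \ref{flat_family} applied to the leading monomials enters), $\mathcal{I}_0$ remains $m_{\mathcal{O}_{\mathbb{C}^n,0}}$-primary and $\psi\in\PSH(\Delta^n)\cap L_{loc}^{\infty}(\Delta^n\setminus\{0\})$.

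The equalities $e_j(\varphi_{\mathcal{I}})=e_j(\psi)$ then follow from Corollary \ref{Monge-Ampere_Hilbert_Samuel_2}, which identifies both sides with the mixed Hilbert--Samuel multiplicities $e_j(\mathcal{I})$ and $e_j(\mathcal{I}_0)$, combined with the classical invariance of mixed Hilbert--Samuel multiplicities under flat deformation of ideals (they depend only on the Hilbert function). The inequality $c_n(\varphi_{\mathcal{I}})\ge c_n(\psi)$ is the standard upper semicontinuity of the log canonical threshold along the family $\mathcal{I}_s$: since $c(\varphi_{\mathcal{I}_s})=c(\varphi_{\mathcal{I}})$ for $s\neq 0$ by biholomorphic invariance of the lct at $0$, passing to the limit $s\to 0$ yields $c(\psi)\le c(\varphi_{\mathcal{I}})$.

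The main obstacle is the equality $c_i(\varphi_{\mathcal{I}})=c_i(\psi)$ for $1\le i\le n-1$. The inequality $c_i(\psi)\le c_i(\varphi_{\mathcal{I}})$ follows from the same semicontinuity argument applied to the weighted lct with weight $\|z\|^{2(n-i)}$ via Proposition \ref{weighted_property}. For the reverse inequality, the key observation is that $\phi_s$ fixes every subspace $H\subset\{z_n=0\}$ pointwise, so $\mathcal{I}|_H=\mathcal{I}_0|_H$ in $\mathcal{O}_{H,0}$ and consequently $c(\varphi_{\mathcal{I}}|_H)=c(\psi|_H)$; what remains is to show that for $i\le n-1$ the supremum defining $c_i(\varphi_{\mathcal{I}})$ is attained or arbitrarily approximated by such subspaces. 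Equivalently, for any $i$-dimensional $H$ one must produce $H'\subset\{z_n=0\}$ with $c(\psi|_{H'})\ge c(\varphi_{\mathcal{I}}|_H)$. This extra room is precisely what the codimension condition $i<n$ provides, and I expect it to be carried out by invoking the Demailly approximation theorem (Proposition \ref{Demailly_approximation}) to reduce $\varphi_{\mathcal{I}}$ to the ideal setting and then using Ohsawa--Takegoshi extension to transport test sections from $\{z_n=0\}$ to $\mathbb{C}^n$ while controlling the multiplier ideal under the $S^1$-symmetry of $\psi$; this transport is exactly what fails when $i=n$, explaining why only an inequality survives there.
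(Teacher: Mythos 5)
Your one-parameter degeneration to a $z_n$-circle-invariant ideal is indeed the paper's starting point, and by semicontinuity (\cite{DK01} for the lct, \cite{Hi08} for Monge-Amp\`ere masses) it does give the one-sided bounds $c_j(\varphi_{\mathcal I})\ge c_j(\psi)$ and $e_j(\varphi_{\mathcal I})\le e_j(\psi)$. The first genuine gap is your claim that the equalities $e_j(\varphi_{\mathcal I})=e_j(\psi)$, in particular for $j=n$, follow from ``constancy of the Hilbert function of $\mathcal{O}_{\mathbb C^n,0}/\mathcal{I}_s^k$''. Flatness of a Gr\"obner-type family preserves the colength of $\mathcal{I}_s$ itself, but the flat limit of $\mathcal{I}_s^{k}$ is the initial ideal of $\mathcal{I}^{k}$, which in general strictly contains $\mathcal{I}_0^{k}=(\mathrm{in}\,\mathcal{I})^{k}$; hence Hilbert--Samuel (and mixed) multiplicities can jump. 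Concretely, for $n=2$, $z_n=z_2$ and $\mathcal{I}=(z_1^2-z_2^3,\ z_1z_2)$ one has $e_2(\mathcal{I})=\dim_{\mathbb C}\mathcal{O}_{\mathbb C^2,0}/\mathcal{I}=5$ (a parameter ideal), while any standard basis refining the $z_2$-grading yields $\mathcal{I}_0=(z_1^2,\ z_1z_2,\ z_2^4)$ with $e_2(\mathcal{I}_0)=6$ by the Newton polyhedron formula, although the colengths agree (both equal $5$). So your $\psi=\varphi_{\mathcal{I}_0}$ violates $e_n(\varphi_{\mathcal I})=e_n(\psi)$, and the flatness argument cannot be repaired as stated. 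This is exactly why the paper does not stop at the single initial ideal: it applies the construction to all powers, sets $\psi=\bigl(\sup_p 2^{-p}\varphi_{\mathcal{J}_{2^p}}\bigr)^{*}$ where $\mathcal{J}_p$ is the initial-coefficient ideal of $\mathcal{I}^{p}$, and recovers $e_n(\psi)\le e_n(\mathcal{I})$ from $\dim_{\mathbb C}\mathcal{O}/\mathcal{J}_p=\dim_{\mathbb C}\mathcal{O}/\mathcal{I}^{p}$ (Proposition \ref{dimension_ideal}) combined with Lech's inequality \cite{Le60}.

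The second gap, which you acknowledge, is the equality $c_i(\varphi_{\mathcal I})=c_i(\psi)$ for $i\le n-1$ (and likewise $e_i$), and the Ohsawa--Takegoshi transport you sketch is not the mechanism used nor obviously workable. The missing ingredient is to choose the hyperplane \emph{before} degenerating: by the generic slicing theorem for Lelong numbers/Monge--Amp\`ere masses (Theorem 5.14 in \cite{De92}) and the genericity of restrictions for log canonical thresholds from \cite{Hi17}, a generic $(n-1)$-dimensional subspace $H$ satisfies $c_i(\varphi|_H)=c_i(\varphi)$ and $e_i(\varphi|_H)=e_i(\varphi)$ for all $i\le n-1$; one then takes coordinates with $H=\{z_n=0\}$ and performs the degeneration with respect to that $z_n$. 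Since the restrictions to $H$ satisfy $\psi|_H\ge\varphi|_H$ (in your construction they even coincide), one gets $c_i(\psi)\ge c_i(\psi|_H)\ge c_i(\varphi|_H)=c_i(\varphi)$ and $e_i(\psi)\le e_i(\psi|_H)\le e_i(\varphi|_H)=e_i(\varphi)$, which together with the semicontinuity inequalities give the equalities for $i\le n-1$. In your setup the coordinates are arbitrary, so there is no reason the supremum defining $c_i(\varphi_{\mathcal I})$ is approximated by subspaces of $\{z_n=0\}$, and the $e_i$ equalities for $i\le n-1$ also come from this slicing argument rather than from flatness. With the generic choice of $H$ inserted first and the powers-plus-Lech device for $e_n$, your outline would align with the paper's proof.
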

\begin{proof}
To simplify notations, we set $\varphi = \varphi_{ \mathcal I }$. For $H\in\mathbb { G }r ( n - 1, \mathbb C^n )$, the Grassmannian manifold of $(n-1)$-dimensional subspaces in $\mathbb C^n$ and $1\leq i\leq n-1$, we define
$$\mathcal { L }_i ( H ): = \{ E\in\mathbb { G }r ( i , H ): c_i ( \varphi |_{ E } ) \not = c_i ( \varphi ) \text{ or } e_i ( \varphi |_{ E } ) \not = e_i ( \varphi ) \},$$
$$\mathcal { L }_{ i, n- 1 }: = \{ H\in\mathbb { G }r ( n - 1, \mathbb C^n ): \ \mu_{ \mathbb { G }r (  i, H ) } ( \mathcal { L }_{ i } ( H ) ) > 0  \},$$
and 
$$\mathcal L: = \bigcup\limits_{ i = 1 }^n \mathcal { L }_{ i , n - 1 },$$
where $\mu_{ \mathbb { G }r ( i , H ) }$ is the Haar measure on $\mathbb { G }r ( i , H )$. We will prove that 
$$\mu_{ \mathbb { G }r ( n - 1, \mathbb C^n ) } ( \mathcal L ) = 0.$$
Indeed, By Siu's theorem (see \cite{Si74} and also Theorem 5.14 in \cite{De92}), we have:
$$e_i ( \varphi |_{ H } ) = e_i ( \varphi ),$$
for $\mu_{ \mathbb { G }r ( i , \mathbb C^n ) }$-almost everywhere on $\mathbb { G }r ( i, \mathbb C^n )$. On the other hand, from the the proof of Theorem 1.3 in \cite{Hi17}, we have
$$c_i ( \varphi |_{ E } ) = c_i ( \varphi ),$$
for $\mu_{ \mathbb { G }r ( i , \mathbb C^n ) }$-almost everywhere on $\mathbb { G }r ( i, \mathbb C^n )$. Therefore, we obtain
$$\mu_{ \mathbb { G }r ( n - 1, \mathbb C^n ) } ( \mathcal { L }_{ i, n - 1 } ) = 0,\ \forall\ 1\leq i\leq n - 1.$$ 
These equalities imply that
$$\mu_{ \mathbb { G }r ( n - 1, \mathbb C^n ) } ( \mathcal L ) = 0.$$
Now, take an element $H\in\mathbb { G }r ( n - 1, \mathbb C^n )\backslash \mathcal L$. Then, for all $1\leq i\leq n$, we have $c_i ( \varphi |_{ H } ) = c_i ( \varphi )$ and $e_i ( \varphi |_{ H } ) = e_i ( \varphi )$. Without of loss generality, we can assume that $H = \{ z_n = 0 \}$. 

Next, consider a monomial ordering $<$ on $\mathcal O_{ \mathbb C^n , 0 }$ defined as follows: $z^{ \alpha } < z^{ \beta }$ if and only if $( \alpha_n, ..., \alpha_1 )< ( \beta_n, ..., \beta_1 )$ in lexicographic order. Let $\mathcal J$ be the ideal generated by
$$\{ g( z' ) z_n^t: \exists\ h\in\mathcal O_{ \mathbb C^n , 0 }: \text{ such that } g ( z' ) z_n^t + h ( z ) z_n^{ t + 1 }\in\mathcal I \}.$$
We define
$$u = \varphi_{ \mathcal J } \in PSH ( \Delta^n )\cap L_{ loc }^{ \infty } ( \Delta^n\backslash \{ 0 \} ).$$
It is clear that $u ( z ) = u ( z' , | z_n | ),\ \forall\ z = ( z', z_n )\in \Delta^n$  and $u |_{ H } \geq \varphi |_{ H }$. We will show that $c_i (\varphi ) \geq c_i ( u )$ and $e_i (\varphi ) \leq e_i ( u )$, for all $1\leq i\leq n$. Indeed, we can choose $\{ g_j \}_{ 1\leq j\leq k }\subset\mathcal O_{ \mathbb C^{ n - 1 } , 0 }$, $\{ h_j \}_{ 1\leq j\leq k }\subset\mathcal O_{ \mathbb C^{ n } , 0 }$ and integers $\{ t_j \}_{ 1\leq j\leq k }$ such that $\mathcal J$ is generated by $\{ g_j z_n^{ t_j } \}_{ 1\leq j\leq k}$ and $\mathcal I$ is generated by $\{ g_j z_n^{ t_j } + h_j z_n^{ t_j + 1 } \}_{ 1\leq j\leq k }$. For $s\in\mathbb N$, define the plurisubharmonic function
$$u_s= \frac { 1 } { 2 } \log \sum\limits_{ j = 1 }^k | g_j z_n^{ t_j } + \frac { 1 } { s }  h_j z_n |^2.$$
Since $u_s$ and $\varphi ( z' , \frac { 1 } { s } z_n )$ are equivalent in singularity, it follows that
$$c_i ( u_s ) = c_i ( \varphi ), \text{ and } e_i ( u_s ) = e_i ( \varphi ), \ \forall\ 1\leq i\leq n,\ s\in\mathbb N.$$
On the other hand, $u_s$ converges uniformly to $u$ on $U\backslash \mathbb B (0,r)$ for all $r >0$, where $U$ is a neighbourhood of $0$. By the main theorem in \cite{DK01}, we have
$$\liminf\limits_{s\to\infty} c_i (u_s ) \geq c_i (u ),$$
and Theorem 3.4 in \cite{Hi08} , we have
$$\limsup\limits_{s\to\infty} e_i ( u_s ) \leq e_i (u ).$$
Therefore, we conclude that
$$c_i (\varphi ) \geq c_i ( u ) \text{ and } e_i (\varphi ) \leq e_i ( u ),\ \forall 1\leq i\leq n.$$ 
For $1\leq i \leq n-1$, since $u |_{ H } \geq \varphi |_{ H }$, it follows that
$$c_i (\varphi ) = c_i ( \varphi |_{ H } ) \leq c_i ( u |_{ H } ) \leq c_i ( u ),$$
and 
$$e_i (\varphi ) = e_i ( \varphi |_{ H } ) \geq e_i ( u |_{ H } ) \geq e_i ( u ).$$
Combining the above inequalities, we obtain 
$$c_i (\varphi ) = c_i ( u )\ \text{ and } e_i (\varphi ) = e_i ( u ),\ \forall\ 1\leq i\leq n - 1.$$ 
We denote by $\mathcal {J}_p$ the ideal generated by 
$$\{ g( z' ) z_n^t: \exists\ h\in\mathcal O_{ \mathbb C^n , 0 }: \text{ such that } g ( z' ) z_n^t + h ( z ) z_n^{ t + 1 }\in\mathcal {I}^p\}.$$
Since $\mathcal {J}_p^q\subset \mathcal {J}_{ p q }$, for all $p,q\in\mathbb N$, it follows that 
$$\varphi_{ \mathcal J_1 }\leq \frac { 1 } { 2} \varphi_{ \mathcal {J}_{ 2 } }\leq ...$$
Define
$$\psi = ( \limsup\limits_{ p\to\infty } \frac { 1 } { 2^p } \varphi_{ \mathcal J_{ 2^p } } )^{ * }.$$
We observed that $\psi\in PSH ( \Delta^n )\cap L_{ loc }^{ \infty } ( \Delta^n\backslash \{ 0 \} )$, $\psi ( z ) = \psi ( z' , | z_n | ),\ \forall\ z = ( z' , z_n )\in \Delta^n$ and $\frac { 1 } { 2^p } \varphi_{ \mathcal J_{ 2^p } }\nearrow\psi$. Therefore, we have
$$\aligned 
c_i ( \psi ) &= \lim\limits_{ p\to\infty } 2^p  c_i ( \varphi_{\mathcal J_{ 2^p } } )\\
&= \lim\limits_{ p\to\infty } 2^p  c_i ( \varphi_{ \mathcal {I}^{ 2^p } } )\\
&= c_i ( \varphi ),\ \forall\ 1\leq i\leq n - 1,
\endaligned$$
$$\aligned 
c_n ( \psi ) &= \lim\limits_{ p\to\infty } 2^p  c_n( \varphi_{\mathcal {J}_{ 2^p } } )\\
&\geq \lim\limits_{ p\to\infty } 2^p  c_n ( \varphi_{ \mathcal {I}^{ 2^p } } )\\
&= c_n ( \varphi ),
\endaligned$$
$$\aligned 
e_i ( \psi ) &= \lim\limits_{ p\to\infty } \frac { 1 } { 2^{ ip } }  e_i ( \varphi_{\mathcal {J}_{ 2^p } } )\\
&= \lim\limits_{ p\to\infty } \frac { 1 } { 2^{ ip } } e_i ( \varphi_{ \mathcal {I}^{ 2^p } } )\\
&= e_i ( \varphi ),\ \forall\ 1\leq i\leq n - 1,
\endaligned$$
and 
$$\aligned 
e_n ( \psi ) &= \lim\limits_{ p\to\infty } \frac { 1 } { 2^{ np } }  e_n ( \varphi_{\mathcal {J}_{ 2^p } } )\\
&\geq\lim\limits_{ p\to\infty } \frac { 1 } { 2^{ np } } e_n ( \varphi_{ \mathcal {I}^{ 2^p } } )\\
&= e_n ( \varphi ),
\endaligned$$
It remains to prove that
$$e_n ( \psi )\leq e_n ( \varphi ).$$
Indeed, we observed that $IM ( \mathcal I^{ p } ) = IM ( \mathcal J_{ p } )$ for all $p\geq 1$. Therefore, by Proposition \ref{Monge-Ampere_Hilbert_Samuel_1}, Proposition \ref{dimension_ideal} and Lech's theorem (see Theorem 3.1 in \cite{Le60}) we obtain 
$$\aligned
e_n (\psi ) &= \lim\limits_{ p\to\infty } \frac { 1 } { 2^{ np } } e_n ( \varphi_{ \mathcal {J}_{ 2^p } } )\\
&=\lim\limits_{ p\to\infty } \frac { 1 } { 2^{ np } } e_n ( \mathcal {J}_{ 2^p } )\\
&\leq\lim\limits_{ p\to\infty } \frac { 1 } { 2^{ np } } n! \text{Length}_{ \mathcal{O}_{ \mathbb C^n, 0 } } \big( \frac { \mathcal{O}_{ \mathbb C^n, 0 } } { \mathcal {J}_{ 2^p } } \big)\\
&=\lim\limits_{ p\to\infty } \frac { 1 } { 2^{ np } } n! \dim_{ \mathbb C } \big( \frac { \mathcal{O}_{ \mathbb C^n, 0 } } { \mathcal {I}^{ 2^p } } \big)\\
&=e_n ( \mathcal {I} ) = e_n ( \varphi ).
\endaligned$$
\end{proof}

{\bf 5.1. Proof of Theorem \ref{Theorem1}:}

We will prove Theorem \ref{Theorem1} by induction on the dimension $n$. Clearly, the statement holds for $n=1$. Assume the theorem holds for dimension $n-1$. As arguments in Lemma \ref{toric_lemma}, we can choose a complex hyperplane $H\subset\mathbb C^n$ through $0$ such that for all $1\leq k\leq n - 1$,
$$c_k(\varphi ) = c_k(\varphi |_{ H }),$$
and 
$$e_k(\varphi ) = e_k(\varphi |_{ H }).$$
By the induction hypothesis (which applies to $n-1$), for all $1\leq k\leq n-1$, we have 
$$c_k (\varphi) \geq c_{ k - 1} (\varphi ) + \frac { 1 } { \prod\limits_{ i = 1 }^{ k - 1 } \big( c_{ i } ( \varphi ) - c_{ i - 1 } ( \varphi ) \big) e_k (\varphi ) },$$
Now, we only need to prove
$$c_n (\varphi) \geq c_{ n - 1} (\varphi ) + \frac { 1 } { \prod\limits_{ i = 1 }^{ n - 1 } \big( c_{ i } ( \varphi ) - c_{ i - 1 } ( \varphi ) \big) e_n (\varphi ) },$$
 Indeed, we will prove the result in three steps:

Step 1: We first prove the theorem in the case where
$$\varphi ( z ) = \max\limits_{ 0\leq i\leq m + 1 } \{ u_i + a_i \log | z_n | \},$$ 
with 
$$0 = a_0 < a_1 < ... < a_m < a_{ m + 1 } = 1,$$ 
and $u_0, u_1, ..., u_{m+1}\in\mathcal E ( \Delta^{ n - 1 } )$, $u_0\leq u_1\leq ... \leq u_m\leq u_{ m + 1 } = 0$. For each $t\in ( a_i ,a_{ i + 1 } ]$ ($1\leq i\leq m$ ), we define
$$v_t: = \max \{ u_0 , \frac { t } { t - a_1 } u_1,..., \frac { t } { t - a_i } u_i \} \in\mathcal E ( \Delta^{ n - 1 }),$$
and
$$\psi_t ( z ) = \max \{ v_t ( z' ), t \log | z_n | \}.$$
We observe that
$$\varphi = \min\{ \psi_t:\ t\in (a_1 , 1 ] \}.$$
We aim to prove that 
$$c_{ \mu } (\varphi ) = \inf\{ c_{ \mu } ( \psi_t ) :\  t\in (a_1 , 1 ]\},$$
for all locally moderate measure $\mu$ (see \cite{DNS10} for the definition). It suffices to show that
$$c_{ \mu } (\varphi ) \geq \inf\{ c_{ \mu } ( \psi_t ):\ t\in (a_1 , 1 ]\}.$$
Indeed, we define
$$\varphi_{ j }: = \min\{ \psi_t :\ t\in \{ a_1, a_1 + \frac { 1 - a_1 } { j }, ..., 1 \} \}.$$
Since 
$$\psi _{ t_2 } \geq \psi _{ t_1 } + [ t_2 - t_1 ]  \log | z_n |,\ \forall a_1\leq t_1\leq t_2\leq 1,$$ 
we obtain
$$\varphi_{ j } + \frac { 1 - a_1 } { j } \log | z_n | \leq \varphi.$$
Hence 
$$c_{ \mu } ( \varphi_{ j } + \frac { 1 - a_1 } { j } \log | z_n | )  \leq c_{ \mu } ( \varphi ).$$
Moreover, by Proposition \ref{weighted_lct}, we get
$$\aligned
c_{\mu } ( \varphi ) &\geq \lim\limits_{ j\to \infty } c_{ \mu } ( \varphi_{ j } )\\
&=\min\{ c_{ \mu } ( \psi_t ):\ t\in \{ a_1, a_1 + \frac { 1 - a_1 } { j }, ..., 1 \} \}\\
&\geq \inf\{ c_{ \mu } ( \psi_t ):\ t\in (a_1 , 1 ]\}.
\endaligned$$
Define the function
$$F_n (c_1,...,c_n) = c_1( c_2-c_1 ) ... ( c_{n} - c_{n-1} ).$$
Our goal is to show:
$$F_n ( c_1 ( \varphi ), ... , c_{ n } ( \varphi  ) )\geq e_n ( \varphi ) ).$$
Indeed, we can choose $t_0\in (a_1 , 1 ]$ such that
$$\aligned 
c_n ( \varphi ) &= c_n ( \psi_{ t_0 } )\\
&= c_{ n - 1 } ( v_{ t_0 } ) + \frac { 1 } { t_0 }\\
&= c_{ n - 1 } ( v_{ t_0 } ) + \frac { e_{ n - 1 } ( v_{ t_0 } ) } { e_n ( \psi_{ t_0 } ) }
\endaligned$$
Define $d_n = c_n ( \psi_{ t_0 } ) = c_n ( \varphi )$, $d_1 = c_1 ( v_{ t_0 } )$ and for $2\leq i\leq n-1$:
$$d_{i} = \frac { c_{ i - 1 } ( v_{ t_0 } ) + d_{ i + 1 } }  { 2 } + | c_{ i } ( v_{ t_0 } ) -  \frac { c_{ i - 1 } ( v_{ t_0 } ) + d_{ i + 1 } }  { 2 } |.$$
We then observe:
$$d_i\geq c_i ( v_{ t_0 } )\geq c_i (u_0) = c_i ( \varphi ),\ \forall\ 1\leq i\leq n-1,$$
$$d_{ i + 1 } - d_{ i } =\min ( d_{ i + 1 } - c_i ( v_{ t_0 } ), c_i ( v_{ t_0 } ) - c_{ i - 1 } ( v_{ t_0 } ) ) \leq d_{ i  } - d_{ i - 1 },\ \forall\ 1\leq i\leq n-1,$$
and
$$F_n ( c_1 ( v_{ t_0 } ), ... , c_{ n } ( \psi_{ t_0 } ) ) = F_n ( d_1, ..., d_n ),$$
By Propositions \ref{concave_property} and \ref{weighted_property}, both $( c_1 ( \varphi ), ... , c_{ n } ( \varphi  ) )$, $( d_1 , ... , d_n )$ lie in the convex cone
$$D = \{ c\in ( \mathbb R^{ + } )^n:\ c_{ i + 1 } - c_{ i } \leq c_{ i } - c_{ i - 1 },\ \forall\ 1\leq i\leq n-1 \}.$$
Moreover, since $\frac { \partial F_n } { \partial c_i }\leq 0$ on $D$ for all $1\leq i\leq n - 1$, we conclude: 
$$F_n ( c_1 ( \varphi ), ... , c_{ n } ( \varphi  ) ) \geq F_n ( d_1, ..., d_n ) = F_n ( c_1 ( v_{ t_0 } ), ... , c_{ n } ( \psi_{ t_0 } ) ),$$
By the induction hypothesis (which applies to $n-1$) and $\varphi\leq\psi_{ t_0 }$, we have
$$F_n ( c_1 ( v_{ t_0 } ), ... , c_{ n } ( \psi_{ t_0 } ) ) = \frac { 1 } { t_0 } F_{ n - 1 } ( c_1 ( v_{ t_0 } ), ... , c_{ n - 1 } ( v_{ t_0 } ) ) \geq \frac { 1 } { t_0 e_{ n - 1 } ( v_{ t_0 } ) }$$
$$= \frac { 1 } {  e_{ n } ( \psi_{ t_0 }) } \geq \frac { 1 } { e_n (\varphi ) }.$$
Combining these inequalities, we obtain
$$F_n ( c_1 ( \varphi ), ... , c_{ n } ( \varphi  ) ) \geq \frac { 1 } { e_n (\varphi ) }.$$
Step 2: We now consider the case $\varphi ( z ) = \varphi_{ \mathcal I }$, where $\mathcal I\subset\mathcal O_{ \mathbb C^n, 0 }$ is an $\mathfrak{m}_{ \mathcal O_{ \mathbb C^n, 0} }$-primary ideal.

By Lemma \ref{toric_lemma}, there exists a function $\psi\in PSH ( \Delta^n )\cap L_{ loc }^{ \infty } ( \Delta^n\backslash \{ 0 \} )$ such that $\psi ( z ) = \psi ( z' , | z_n | ),\ \forall\ z = ( z', z_n )\in \Delta^n$, $c_n ( \varphi_{\mathcal I } ) \geq c_n ( \psi )$, $c_i ( \varphi_{\mathcal I } ) = c_i ( \psi ),\ \forall\ 1\leq i\leq n-1$, and $e_j ( \varphi_{\mathcal I } ) = e_j ( \psi ),\ \forall\ 1\leq j\leq n$.
From the construction in Lemma \ref{toric_lemma}, we can assume that there exist plurisubharmonic functions $u_0, ... , u_s$ in the class $\mathcal E(\Delta^{n-1})$ and constants $0\leq a_1 \leq ... \leq a_s$ such that
$$\psi ( z ) = \max\limits_{ 0\leq i \leq s + 1 }  \{ u_i + a_i \log | z_n |  \},$$
where $a_0 = 0$ and $u_{ s + 1 } = 0$. Choose integers $0 = k_0 < k_1 < k_2 < ... < k_{ m + 1 }$ such that
$$0 = a_0 = ... = a_{ k_1 - 1 } < a_{ k_1 } = ... =a_{ k_2 - 1 } < ...< a_{ k_{ m + 1 } } = ... = a_{ s },$$
We set
$$v_j = \max \{ u_i : 0\leq i\leq k_{ j + 1 } - 1 \},\ \forall 1\leq j\leq m,$$
$v_{m+1}=0$ and and $b_j = a_{ k_j }$, $\ \forall\ 0\leq j\leq m + 1$. We observed that
$$\psi = \max \{ v_j + b_j \log| z_n | : \ 0\leq j\leq m + 1 \}.$$
By step 1, we conclude:
$$F_n ( c_1 ( \varphi ), ... , c_{ n } ( \varphi  ) )  \geq F_n ( c_1 ( \psi ), ... , c_{ n } ( \psi  ) ) \geq \frac { 1 } { e_n ( \psi ) } = \frac { 1 } { e_n (\varphi ) }.$$
Step 3:Finally, we consider the general case.

For every real number $j>0$, let $\mathcal{H}_{j\varphi}(\Omega)$ be the Hilbert space of holomorphic functions $f$ on $\Omega$ such that 
$$\int\limits_{\Omega} |f|^2e^{-2j\varphi} dV_{2n}<+\infty .$$ 
Let 
$$\psi_j=\frac{1}{2j}log\sum|g_{j,i}|^2,$$ 
where $\{ g_{j,i} \}_{ i\geq 1 }$ is an orthogonal basis of $\mathcal{H}_{j\varphi}(\Omega)$. Then, by Step 2 and Proposition \ref{Demailly_approximation}, we obtain:
$$F_n ( c_1 ( \varphi ), ... , c_{ n } ( \varphi  ) ) = \lim\limits_{ j\to \infty } F_n ( c_1 ( \psi_j ), ... , c_{ n } ( \psi_j ) ) \geq  \lim\limits_{ j\to \infty }  \frac { 1 } { e_n ( \psi_j ) } \geq \frac { 1 } { e_n (\varphi ) }.$$
\qed

{\bf 5.2. Proof of Theorem \ref{Theorem2}:}

We begin by noting the obvious inequality:
$$c_{ || z ||^{2t} dV_{2n} } ( \varphi_{ f } ) \leq \min \left\{ c_{ || z ||^{2t} dV_{2n} } ( \varphi_{ \bar { \mathcal  D } ( f ) } ), 1 \right\},\ \forall\ t > -n,$$
It remains to prove the reverse inequality:
$$c_{ || z ||^{2t} dV_{2n} } ( \varphi_{ f } ) \geq \min \left\{ c_{ || z ||^{2t} dV_{2n} } ( \varphi_{ \bar { \mathcal  D } ( f ) } ), 1 \right\},\ \forall\ t > -n,$$
Let $\{ f_i \}_{1\leq i\leq m}$ be a basis of the ideal $\bar{ \mathcal {D} } ( f )$. Define a holomorphic function $F:\Delta_{ r }^{ n + m }\to \mathbb C$ by:
$$F( z , w ) = f( z ) + \sum\limits_{ 1\leq i\leq m} w_i f_i .$$
Let $\mathcal I\subset\mathcal O_{ \mathbb C^{ n + m }, 0}$ be the ideal generated by $\{F, z_i \frac { \partial F } { \partial z_j } : 1\leq i ,j\leq n\}$. Since $\mathcal D ( \overline{ \mathcal D } ( f ) ) = \mathcal D (f)$, there exist $a_{ ijkpq }, a_{ ijk }\in\mathcal { O }_{ \mathbb C^{n}, 0 }$ such that 
$$z_j \frac { \partial f_i } { \partial z_k } = \sum\limits_{ 1\leq p, q\leq n } a_{ijkpq} z_p \frac { \partial f} { \partial z_q} + a_{ ijk } f.$$ 
Therefore
$$\aligned z_j \frac { \partial F } { \partial z_k } &= z_j \frac { \partial f } { \partial z_k } + \sum\limits_{ 1\leq i \leq m } w_i z_j \frac { \partial f_i } { \partial z_k }\\ 
&= z_j \frac { \partial f } { \partial z_k } + \sum\limits_{ 1\leq i \leq m } w_i \sum\limits_{ 1\leq p, q\leq n } [ a_{ ijkpq } z_p \frac { \partial f} { \partial z_q} + a_{ ijk } f ]. 
\endaligned$$ 
Hence,
$$\sum\limits_{ 1\leq p, q\leq n } [ \delta_{ (j,k) (p,q) } + \sum\limits_{ 1\leq i\leq m} w_i a_{ ijkpq } ] z_p \frac { \partial f} { \partial z_q}  = z_j \frac { \partial F } { \partial z_k } - f \sum\limits_{ 1\leq i \leq m } w_i a_{ ijk }.$$ 
Since the matrix $[ \delta_{ (j,k) (p,q) } + \sum\limits_{ 1\leq i\leq m} w_i a_{ ijkpq } ]_{1 \leq j,k,p,q\leq n}$, with coefficients in $\mathcal { O }_{ \mathbb C^{ n + m }, 0 }$, is invertible, there exist $b_{ pqjk },  b_{ pq }, h_{ ijk }, h_i\in\mathcal { O }_{ \mathbb C^{ n + m }, 0 }$ such that 
$$z_p \frac { \partial f} { \partial z_q} = \sum\limits_{ 1 \leq j,k \leq n } b_{ pqjk } z_j \frac { \partial F } { \partial z_k } + b_{ pq } f,$$ and 
$$f_i = \sum\limits_{ 1 \leq j,k \leq n } h_{ ijk } z_j \frac { \partial F } { \partial z_k } + h_{ i } f,$$ 
for all $1\leq p,q \leq n$, $1\leq i\leq m$. Thus, we find 
$$\aligned F &= f + \sum\limits_{ 1\leq i\leq m } w_i f_i\\ 
&= f + \sum\limits_{ 1\leq i\leq m } w_i \sum\limits_{ 1 \leq j,k \leq n } [h_{ ijk } z_j \frac { \partial F } { \partial z_k } + h_{ i } f]\\ 
&=f [ 1 + \sum\limits_{ 1 \leq i\leq m } w_i h_{ i } ] + \sum\limits_{ 1\leq i\leq m } w_i \sum\limits_{ 1 \leq j,k \leq n } h_{ ijk } z_j \frac { \partial F } { \partial z_k }. \endaligned$$ 
This shows that $f\in \mathcal I$. Hence, 
$$\frac { \partial F } { \partial w_{ i } } = f_i\in\mathcal I,\ \forall 1\leq i\leq m.$$ 
By Theorem 9.1.7 in \cite{JP00}, there exist holomorphic maps $\Phi : \Delta_r^{ n + m }\to \mathbb C^n$ and $h: \Delta_r^{ n + m }\to \mathbb C$, such that $\Phi ( z , 0 ) = z $, $\Phi ( 0 , w ) = 0$, $h ( z , 0 ) = 1$ 
$$\det [ \frac { \partial \Phi_i } { \partial z_j } (0,0) ]_{ 1\leq i, j\leq n } = 1,$$
and
$$F ( z , w ) = h ( z ,w ) F ( \Phi ( z , w ) , 0 ) = h ( z ,w ) f ( \Phi ( z , w ) ).$$
From this, it follows that there exist $\epsilon > 0$ such that
$$c_{ ||z||^{ 2t } dV_{ 2n } } ( \varphi_{ f } ) = c_{ ||z||^{ 2t } dV_{ 2n } } ( \log | F ( . , w ) | ),\ \forall\ w\in \Delta_{ \epsilon }^{ m },\ \forall\ t > -n.$$
On the other hand, by general version of Proposition 1.5 in \cite{DK01} (see Proposition 2.2 in \cite{HHT21}), we have
$$c_{ ||z||^{ 2t } dV_{ 2n } } ( \log | F ( . , w )| ) = \min \left\{ c_{ ||z||^{ 2t } dV_{ 2n } } (  \varphi_{ \bar{ \mathcal {D} } ( f ) } ) , 1 \right\}$$
for $dV_{ 2m }$-almost everywhere $w\in\mathbb C^{ m }$. Hence, we conclude:
$$c_{ ||z||^{ 2t } dV_{ 2n } } ( \varphi_{ f }) = \min \left\{ c_{ ||z||^{ 2t } dV_{ 2n } } ( \varphi_{ \bar{ \mathcal {D} } ( f ) } ) , 1  \right\},\ \forall\ t > -n.$$

\qed

{\bf 5.3. Proof of Corollary \ref{Corollary1}:}

The statement is clearly true if the order of vanishing of $f$ at $0$ is less than or equal to 1. In the case where the order is greater than 1, observe that $\frac { \partial f } { \partial z_i }\frac { \partial f } { \partial z_j }\in \bar D ( f )$ for all $1\leq i, j\leq n$, It follows that:
$$\bigcap\limits_{ g \in \bar { \mathcal  D } ( f )  } \{ g = 0 \} = \{f = \frac { \partial f } { \partial z_1 } = ... = \frac { \partial f } { \partial z_n }= 0\}.$$ 
Thus, the inequality in Corollary \ref{Corollary1} follows directly from Theorem \ref{Theorem1} and  \ref{Theorem2}.
\qed

{\bf 5.4. Proof of Corollary \ref{Corollary2}:}

By Theorem \ref{Theorem2}, we have
$$c_n(\varphi_f) = \min \left\{ c_n(\varphi_{\bar{D}(f)}), 1 \right\}.$$
Moreover, since \( D\left( \langle z_1^{m+1}, \dots, z_n^{m+1} \rangle \right) \subset D(f) \), it follows that
$$\langle z_1^{m+1}, \dots, z_n^{m+1} \rangle \subset \bar{D}(f).$$
This implies that $\varphi_{\bar{D}(f)} \geq (m+1) \log \|z\|$. By Proposition \ref{ltc_inequality}, we obtain
$$c_n(\varphi_{\bar{D}(f)}) \geq c_{n-1}(\varphi_{\bar{D}(f)}) + \frac{1}{m+1}.$$
Therefore,
$$c_n(\varphi_f) = \min \left\{ c_n(\varphi_{\bar{D}(f)}), 1 \right\} \geq \min \left\{ c_{n-1}(\varphi_{\bar{D}(f)}) + \frac{1}{m+1}, 1 \right\}\geq\min \left\{ c_{n-1}(\varphi_{ f } ) + \frac{1}{m+1}, 1 \right\}.$$
\qed

\section{Singularity invariants of complex spaces} 

A complex space $X$ is a topological space equipped with an atlas of charts, each isomorphic to an analytic subset of a complex Euclidean space, such that the transition maps are holomorphic in the sense of holomorphic mappings between analytic sets. Complex spaces, particularly subvarieties of complex manifolds, play a central role in both complex and algebraic geometry.

In this section, we explore how singularity invariants associated with plurisubharmonic functions can be extended to complex spaces. These invariants have the potential to serve as tools in the classification and study of complex spaces.

{\bf 6.1. Singularity invariants of plurisubharmonic functions:}

First, let $inv : PSH_{ 0 } \to [ -\infty, +\infty ]$ be a function. We say that $inv$ is a singularity invariant if 
$$inv ( \varphi_{\circ}  H ) = inv (\varphi ),$$ 
for every biholomorphic germ $H: ( \mathbb C^n, 0 ) \to ( \mathbb C^n, 0 )$. 
Given such a singularity invariant $inv$ and a real number $a>0$, we define
$$inv_a ( \varphi ) = inv ( \max (\varphi , a \log || z ||) ).$$
Then $\{ inv_a \}_{ a>0 }$ forms a family of singularity invariants of plurisubharmonic functions. 

{\bf 6.2. Singularity invariants of complex spaces:}

Let $X$ be a complex space, and let $\mathcal{O}_{X, x} $ denote the ring of germs of holomorphic functions at a point $x\in X$. The maximal ideal $\mathfrak{m}_{X,x}$ of the local ring $\mathcal{O}_{X, x} $ is defined as follows:
$$\mathfrak{m}_{X,x} := \{ f \in \mathcal{O}_{X, x} : f(x) = 0 \}.$$
For each point $x\in X$, we define the embedding dimension of $X$ at $x$ by: 
$$N(X,x) = \dim_{ \mathbb C } \big(  \frac {  \mathfrak{m}_{X,x} } {  \mathfrak{m}_{X,x}^2 } \big).$$
The embedding dimension $N(X,x)$ is the smallest integer $N$ such that $(X,x)$ can be embedded in $\mathbb C^N$ (see Section 9.8 of Chapter II in \cite{De12}). Accordingly, we choose a neighborhood $A$ of $x\in X$ such that $(A,x)$ is biholomorphic to an analytic subset $(B,0)$ of a domain $\Omega \subset \mathbb{C}^{N(X,x)} $. We denote $\mathcal{I}(B, \Omega)_0$ as the ideal of germs of holomorphic functions at $0 \in \Omega$ that vanish on $B$. Let $inv : PSH_0 \to [-\infty, +\infty]$ be a singularity invariant of plurisubharmonic functions. By Lemma 4.1 in \cite{Wh65}, this number $inv( \varphi_{\mathcal{I}(B, \Omega)_0} )$ depends only on $(X,x)$ and  is independent of the choice of the pair $(B, \Omega )$. Thus, we define the singularity invariant of $X$ at $x$ as follows:
$$inv(X, x) := inv( \varphi_{\mathcal{I}(B, \Omega)_0} ).$$
We also define the global singularity invariants of the complex space $X$ as follows:
$$inv^{min} (X) = \inf \{ inv (X, x):\ x\in X_{sing} \},$$ 
$$inv^{max} (X) = \sup \{ inv (X, x):\ x\in X_{sing} \},$$ 

\begin{theorem}\label{Theorem3} 
Let $X$ and $Y$ be complex spaces such that there exists a biholomorphic map $f:X\to Y$. Then 
$$inv ( X, x ) = inv ( Y, f ( x ) ).$$
\end{theorem}

\begin{proof}
We choose a neighborhood $A$ of $x\in X$ (resp. a neighborhood $P$ of $f(x)\in Y$) such that $(A,x)$ is biholomorphic to an analytic subset $(B,0)$ of a domain $\Omega \subset \mathbb{C}^{N(X,x)}$ (resp. $(P, f(x) )$ is biholomorphic to an analytic subset $(Q,0)$ of a domain $D \subset \mathbb{C}^{N(Y, f(x) )}$). Then there exists a biholomorphic germ 
$$g: (B,0) \to (Q,0).$$ 
Since $N(X,x) = N(Y, f ( x ) )$, by Lemma 4.1 in \cite{Wh65}, the germ $g$ can be extended to a biholomorphic germ 
$$h: (\Omega , 0)\to (D, 0).$$ 
Hence,
$$\varphi_{\mathcal{I}(B, \Omega)_0} = { \varphi_{\mathcal{I}(Q, D)_0} }_\circ h.$$
Therefore,
$$inv ( X, x ) = inv( \varphi_{\mathcal{I}(B, \Omega)_0} ) = inv( \varphi_{\mathcal{I}(Q, D)_0} ) = inv ( Y, f ( x ) ).$$
\end{proof}

\vskip 0.5cm
\n

{\bf Acknowledgment:} This research is funded by Vietnam National Foundation for Science and Technology Development (NAFOSTED) under grant number 101.02-2021.16.

\vskip 0.5cm
\n

{\bf Data availability} No data was analyzed in this paper.

\vskip 0.5cm
\n

{\bf \Large Declarations}

\vskip 0.5cm
\n

{\bf Conflict of Interest} On behalf of all authors, the corresponding author states that there is no conflict of interest.

\vskip 0.5cm
\n
Pham Hoang Hiep

\n
ICRTM, Institute of Mathematics, Vietnam Academy of Science and Technology, 

\n
18, Hoang Quoc Viet, Hanoi, Vietnam

\n
{\it e-mail\/}: {\tt phhiep@math.ac.vn}

\end{document}